\numberwithin{equation}{section}
\newtheoremstyle{thmlemcorr}{10pt}{10pt}{\itshape}{}{\bfseries}{.}{10pt}{{\thmname{#1}\thmnumber{ #2}\thmnote{ (#3)}}}
\newtheoremstyle{thmlemcorr*}{10pt}{10pt}{\itshape}{}{\bfseries}{.}\newline{{\thmname{#1}\thmnumber{ #2}\thmnote{ (#3)}}}
\newtheoremstyle{remexample}{10pt}{10pt}{}{}{\bfseries}{.}{10pt}{{\thmname{#1}\thmnumber{ #2}\thmnote{ (#3)}}}
\newtheoremstyle{ass}{10pt}{10pt}{}{}{\bfseries}{.}{10pt}{{\thmname{#1}\thmnumber{ A#2}\thmnote{ (#3)}}}
\theoremstyle{thmlemcorr}
\newtheorem{theorem}{Theorem}
\numberwithin{theorem}{section}
\newtheorem{lemma}[theorem]{Lemma}
\newtheorem{corollary}[theorem]{Corollary}
\theoremstyle{thmlemcorr*}
\newtheorem{theorem*}{Theorem}
\newtheorem{lemma*}[theorem]{Lemma}
\newtheorem{corollary*}[theorem]{Corollary}
\newtheorem{proposition*}[theorem]{Proposition}
\newtheorem{problem*}[theorem]{Problem}
\newtheorem{conjecture*}[theorem]{Conjecture}
\newtheorem{definition*}[theorem]{Definition}
\theoremstyle{remexample}
\newtheorem{remark}[theorem]{Remark}
\theoremstyle{ass}
\DeclareMathOperator{\diverg}{div}
\newcommand{\norm}[1]{\|#1\|}
\newcommand{\R}{\mathbb{R}}
\newcommand{\T}{\mathbb{T}}
\newcommand{\term}[1]{\textbf{#1}}
\def\XXint#1#2#3{{\setbox0=\hbox{$#1{#2#3}{\int}$} 
\vcenter{\hbox{$#2#3$}}\kern-.5\wd0}}
\renewcommand{\epsilon}{\varepsilon}
\renewcommand{\phi}{\varphi}
\begin{document}


\title[Weak-Strong Uniqueness]{Weak-Strong Uniqueness for Measure-Valued Solutions of Some Compressible Fluid Models}

\author{Piotr Gwiazda}
\address{\textit{Piotr Gwiazda:}  Institute of Applied Mathematics and Mechanics, University of Warsaw, Banacha 2, 02-097 Warszawa, Poland}
\email{pgwiazda@mimuw.edu.pl}

\author{Agnieszka \'{S}wierczewska-Gwiazda}
\address{\textit{Agnieszka \'{S}wierczewska-Gwiazda:} Institute of Applied Mathematics and Mechanics, University of Warsaw, Banacha 2, 02-097 Warszawa, Poland }
\email{aswiercz@mimuw.edu.pl}

\author{Emil Wiedemann}
\address{\textit{Emil Wiedemann:}Hausdorff Center for Mathematics and Mathematical Institute, Universit\"{a}t Bonn, Endenicher Allee 60, 53115 Bonn, Germany.}
\email{emil.wiedemann@hcm.uni-bonn.de}

\thanks{P. G. is a coordinator of the International Ph.D. Projects Programme of the Foundation for Polish Science operated
within the Innovative Economy Operational Programme 2007--2013
(Ph.D. Programme: Mathematical Methods in
Natural Sciences).
The research of A. \'S.-G. has received funding from the National Science Centre, DEC-2012/05/E/ST1/02218. The authors appreciate the support of the Warsaw Center of Mathematics and Computer Science}

\begin{abstract} We prove weak-strong uniqueness in the class of admissible measure-valued solutions for the isentropic Euler equations in any space dimension and for the Savage-Hutter model of granular flows in one and two space dimensions. For the latter system, we also show the complete dissipation of momentum in finite time, thus rigorously justifying an assumption that has been made in the engineering and numerical literature. 

\vspace{4pt}

\noindent\textsc{MSC (2010): 35L45 (primary); 35A02, 35B40, 35Q35, 35Q31, 76N15, 76T25 (secondary)} 

\noindent\textsc{Keywords: Compressible Euler equations, Savage-Hutter model, weak-strong uniqueness, measure-valued solutions, dissipation in finite time} 


\end{abstract}


\hypersetup{
  pdfauthor = {},
  pdftitle = {},
  pdfsubject = {},
  pdfkeywords = {}
}


\maketitle




\section{Introduction}
A measure-valued solution to a partial differential equation (or a system of equations) is, roughly speaking, a map that gives for every point in the domain a probability distribution of values, and that satisfies the equation only in an average sense. If this probability distribution reduces to a point mass almost everywhere in the domain, then the measure-valued solution is simply a solution in the sense of distributions. The main advantage of measure-valued solutions is the fact that, in many situations, they can easily be obtained from weakly convergent sequences of approximate solutions, even when the convergence of the approximating sequence to a distributional solution may fail due to effects of oscillation and concentration.

Measure-valued solutions to hyperbolic conservation laws were introduced by DiPerna~\cite{diperna}. He showed for scalar conservation laws in one space dimension that measure-valued solutions exist and are, under the assumption of entropy admissibility, in fact concentrated at one point, i.e.\ they can be identified with a distributional (entropy) solution. In other words, in this case the formation of fast oscillations, which corresponds to a measure with positive variance, can be excluded.

In many other physically relevant systems, however, no such compactness arguments are available, and existence of admissible weak (i.e.\ distributional) solutions seems hopeless. In such cases, the existence of measure-valued solutions is the best one can hope for.

For the incompressible Euler equations, DiPerna and Majda~\cite{dipernamajda} showed the global existence of measure-valued solutions for any initial data with finite energy. The main point of their work was to introduce so-called \emph{generalised Young measures}, which take into account not only oscillations, but also concentrations. 

Subsequently, measure-valued solutions were shown to exist for further models of fluid and gas dynamics, e.g.\ compressible Euler and Navier-Stokes equations~\cite{neustupa, kroner} or the Savage-Hutter avalanche model~\cite{gwiazda2005}.

Measure-valued solutions have been criticised for being a too weak notion of solution. Indeed, the by now fairly standard procedure of establishing measure-valued solutions by viscous approximation, thereby circumventing delicate problems of compactness, suggests that the solution thus obtained does not carry enough information to be of much use. In particular, in the absence of admissibility criteria, measure-valued solutions are obviously non-unique to a large extent, as they only contain information on certain moments of the measure.

It is therefore surprising that, in the case of the incompressible Euler equations, the so-called \emph{weak-strong uniqueness} property was proved, on the whole space, for admissible measure-valued solutions by Brenier, De Lellis, and Sz\'{e}kelyhidi~\cite{weak-strong}. This means that if there exists a sufficiently regular (classical) solution, then every admissible measure-valued solution with the same initial data will coincide with the classical solution. Admissibility means that the kinetic energy of the solution never exceeds the initial energy. 

In fact, in~\cite{lions}, P.-L. Lions required any reasonable concept of (very) weak solution to satisfy global existence and weak-strong uniqueness. For the incompressible Euler equations, therefore, admissible measure-valued solutions qualify. It is important though to emphasize the necessity of admissibility: Without this assumption, various examples are known where weak-strong uniqueness fails even for distributional solutions of incompressible Euler~\cite{scheffer, shnirel1, euler1, eulerexistence}. Also, uniqueness need not hold for admissible solutions in the absence of a strong solution, see~\cite{euler2, euleryoung, daneri}.  

We consider in this article two systems of equations in the realm of compressible fluid dynamics: The isentropic Euler equations,
\begin{equation}\label{eulerintro}
\begin{aligned} 
\partial_th+\diverg(hu)&=0\\
\partial_t(hu)+\diverg(hu\otimes u)+\nabla(\kappa h^\gamma)&=hG,
\end{aligned}
\end{equation} 
in any space dimension greater or equal one, and the Savage-Hutter equations
\begin{equation}\label{savhutintro}
\begin{aligned} 
\partial_th+\diverg(hu)&=0\\
\partial_t(hu)+\diverg(hu\otimes u)+\nabla(ah^2)&=h\left(-dB(u)+f\right),
\end{aligned}
\end{equation}
which make sense (from a modelling viewpoint) in one or two space dimensions. Here, $G$ and $f$ are external force densities, and $B(u)$ is a maximal monotone set-valued map. 
The latter system describes the evolution of the depth-averaged velocity and height of some material sliding over an inclined slope. The material is subject to the so-called Coulomb-Mohr friction law. For comprehensive studies, including derivation, numerical computations and experimental results on system~\eqref{savhutintro} and its various modifications, we refer to \cite{SaHu89, GrWiHu99, BoWe2004, BoMaPe2003, GrTaNo2003, GrCu2007, HuWaPu2005, PeBoMa2008, ZaPeTaNi2010}, among others.
We prove (cf. Theorems~\ref{Eweak-strong} and \ref{weak-strong} below):
\begin{theorem}\label{1}
Let (H,U) be a solution of~\eqref{eulerintro} or~\eqref{savhutintro} such that $H$ is Lipschitz continuous in $[0,T]\times\T^n$ and $U\in C^1([0,T]\times\T^n)$. Assume also $H\geq c>0$ for some constant $c$. Then every admissible measure-valued solution of~\eqref{eulerintro} or~\eqref{savhutintro}, respectively, with the same initial data as $(H,U)$ coincides with $(H,U)$.
\end{theorem}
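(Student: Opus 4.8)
The natural tool is the relative energy (also called relative entropy, or modulated energy) method, adapted to the generalised Young measure setting described above. Recall that an admissible measure-valued solution consists of a Young measure $(\nu_{t,x})$ on the phase space $\{(h,u):h\ge 0\}$, a concentration measure $m\in\Mcal^+([0,T]\times\T^n)$, and probability measures $\nu^\infty_{t,x}$ "at infinity" for $m$-a.e.\ point; the continuity and momentum equations hold in an averaged sense, and the total energy $E_{\mathrm{mv}}(t)=\int_{\T^n}\langle\nu_{t,x},E\rangle\dd x+\langle m_t,\langle\nu^\infty_{t,x},E^\infty\rangle\rangle$ obeys an admissibility inequality with the external-force work on the right-hand side, where $E$ is the energy density ($\tfrac12 h|u|^2+\tfrac{\kappa}{\gamma-1}h^\gamma$ for Euler, $\tfrac12 h|u|^2+ah^2$ for Savage--Hutter) and $E^\infty$ its recession function. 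I would measure the distance to $(H,U)$ by the \emph{relative energy}
\[
\Ecal(t)=\int_{\T^n}\Big\langle\nu_{t,x},\ \tfrac12 h\,|u-U(t,x)|^2+P(h\,|\,H(t,x))\Big\rangle\dd x+\Ecal_c(t),
\]
where $P(h\,|\,H)=\tfrac{\kappa}{\gamma-1}\bigl(h^\gamma-H^\gamma-\gamma H^{\gamma-1}(h-H)\bigr)$ (the quadratic Bregman distance $a(h-H)^2$ for Savage--Hutter) is nonnegative and vanishes precisely when $h=H$, and $\Ecal_c(t)$ is the concentration part of the energy at time $t$, which coincides with the concentration part of $\tfrac12 h|u-U|^2+P(h|H)$ because $H$ and $U$ are bounded. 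Then $\Ecal(t)\ge 0$, and since $H\ge c>0$ the identity $\Ecal(t)=0$ forces $\nu_{t,x}=\delta_{(H(t,x),U(t,x))}$ for a.e.\ $x$ and $m_t=0$; as the shared initial data give $\Ecal(0)=0$, the theorem reduces to proving a Gr\"onwall estimate $\Ecal(t)\le C\int_0^t\Ecal(s)\dd s$.

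To establish this I would expand $\Ecal(t)=E_{\mathrm{mv}}(t)-\int_{\T^n}\bigl(\langle\nu_{t,x},hu\rangle\cdot U+(\tfrac{\kappa\gamma}{\gamma-1}H^{\gamma-1}-\tfrac12|U|^2)\langle\nu_{t,x},h\rangle\bigr)\dd x+\int_{\T^n}\kappa H^\gamma\dd x$ and differentiate in $t$, using the admissibility inequality for $E_{\mathrm{mv}}$, the averaged continuity and momentum equations tested against $\psi=\tfrac{\kappa\gamma}{\gamma-1}H^{\gamma-1}-\tfrac12|U|^2$ and $\Psi=U$ (admissible test functions since $H$ is Lipschitz and $U\in C^1$), and the classical equations for $(H,U)$ to eliminate $\partial_tU$ and $\partial_tH$ (so $\partial_tU$ introduces $-(U\cdot\nabla)U$, the pressure gradient, and the force). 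The hypothesis $H\ge c>0$ is used here to write $\tfrac1H\nabla(\kappa H^\gamma)=\tfrac{\kappa\gamma}{\gamma-1}\nabla(H^{\gamma-1})$ and to keep all quantities bounded. A lengthy but routine bookkeeping then shows that all contributions linear in the fluctuations — in particular every external-force term — cancel, leaving quantities dominated by $C\Ecal(t)$: the convective terms reassemble into $-\int_{\T^n}\langle\nu_{t,x},h(u-U)\otimes(u-U)\rangle:\nabla U\dd x\le\|\nabla U\|_\infty\int_{\T^n}\langle\nu_{t,x},h|u-U|^2\rangle\dd x$, the pressure terms into $-\int_{\T^n}\langle\nu_{t,x},P(h|H)\rangle\diverg U\dd x$ plus Taylor remainders bounded by $\langle\nu_{t,x},P(h|H)\rangle$, and the nonnegative dissipation defect from admissibility is simply discarded.

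The step requiring genuine care, rather than bookkeeping, is the concentration measure. The momentum flux $h\,u\otimes u+(\text{pressure})\mathrm{Id}$ has quadratic growth in the phase variables and hence contributes a concentration term $\int_0^t\langle m_s,\langle\nu^\infty_{s,x},\Sigma^\infty\rangle:\nabla U\rangle\dd s$ to the momentum balance, whereas $h$, $hu$ and the continuity flux $hu$ all have vanishing recession function relative to the energy and so do not concentrate — this is exactly why the continuity equation and the time-derivative terms carry no concentration contributions. One must arrange the compactification of phase space so that the recession flux $\Sigma^\infty$ is positive semidefinite with $\mathrm{tr}\,\Sigma^\infty\le C\,E^\infty$; then $|\langle\nu^\infty,\Sigma^\infty\rangle:\nabla U|\le C\|\nabla U\|_\infty\langle\nu^\infty,E^\infty\rangle$ and the concentration term is bounded by $C\int_0^t\Ecal_c(s)\dd s\le C\int_0^t\Ecal(s)\dd s$. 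For Savage--Hutter there is the additional, milder, issue of the maximal monotone friction term $-dB(u)$: using the same selection $b(u)\in B(u)$ in the momentum equation and in the dissipation inequality, and the selection $b(U)\in B(U)$ given with the strong solution, the friction contributions to $\tfrac{d}{dt}\Ecal$ collect into $-d\int_{\T^n}\langle\nu_{t,x},h\,(b(u)-b(U))\cdot(u-U)\rangle\dd x\le 0$ by monotonicity of $B$, and are discarded. Combining the estimates yields $\Ecal(t)\le C\int_0^t\Ecal(s)\dd s$; with $\Ecal(0)=0$, Gr\"onwall's inequality gives $\Ecal\equiv 0$, i.e.\ the measure-valued solution equals $(H,U)$.
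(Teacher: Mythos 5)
Your proposal is correct and follows essentially the same route as the paper: the same relative energy with the Bregman pressure distance, the same test functions ($U$, $\tfrac12|U|^2$, and a multiple of $H^{\gamma-1}$, which you merely combine into one), the same use of the admissibility inequality and of the strong equations to close the Gr\"onwall estimate, the same observation that only the quadratic flux terms concentrate and are controlled by the concentration part of the (relative) energy, and the same monotonicity argument to discard the friction difference in the Savage--Hutter case. The only cosmetic difference is that the paper parametrises the Young measure in the variables $(h,\sqrt{h}\,u)$, so its conclusion reads $\nu_{t,x}=\delta_{(H,\sqrt{H}U)}$.
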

Of course, the precise definitions of measure-valued solutions and admissibility will be given below. It should be mentioned that weak-strong uniqueness for admissible measure-valued solutions was proved in~\cite{weak-strong} for general hyperbolic systems of conservation laws, but this was done only for oscillation measures.

Moreover, the results in~\cite{weak-strong} are valid even for Lipschitz continuous strong solutions. Owing to commutator estimates analogous to the ones provided by Constantin, E and Titi for the incompressible Euler system in~\cite{constetiti}, the result of Theorem~\ref{1} can be obtained assuming only Lipschitz continuity of $U$, and Sobolev regularity of $H$. In fact, as in~\cite{weak-strong} it is sufficient to assume only that the symmetric part of $\nabla U$ be bounded. We omit details.

Weak-strong uniqueness for compressible Euler models appears important in the light of several recent examples of non-uniqueness of admissible weak solutions~\cite{euler2, chiodaroli, chiodarolikreml, chiodarolikremlfeireisl, chiodarolidelelliskreml, feireisl}. For the Savage-Hutter equations, such examples were very recently constructed in~\cite{feireislgwiazdaswierczewska}.

To prove the weak-strong uniqueness, we follow the general strategy of~\cite{feireisljinnovotny} (where weak-strong uniqueness was proved for the compressible Navier-Stokes equations), but we have to adjust these arguments to the measure-valued framework. For the Savage-Hutter system, an additional issue is to give a meaningful definition of measure-valued solutions that accounts for the multi-valued nature of the fricition term $B(u)$ in~\eqref{savhutintro}. Such a definition was proposed in~\cite{gwiazda2005} and we will use it here as well.

If the force $f$ is time-independent and $\norm{f}_\infty<d$, then a special class of solutions to~\eqref{savhutintro} is given by $u=0$ and $h$ independent of time and such that
\begin{equation*}
\left|\nabla h(x)-\frac{f(x)}{2a}\right|\leq\frac{d}{2a}\hspace{0.2cm}\text{for almost every $x$.}
\end{equation*}
Observe that our weak-strong uniqueness result allows to take $(H,U)$ as such a stationary solution, so that in particular every such solution enjoys uniqueness in the class of admissible measure-valued solutions.

For the Savage-Hutter model we also prove the following result:
\begin{theorem}\label{finiteintro}
There exists a finite time $0\leq T<\infty$, only depending on the parameters in~\eqref{savhutintro} and the initial data, such that every admissible measure-valued solution of~\eqref{savhutintro} starting from such initial data has zero momentum for almost every time $t>T$. 
\end{theorem}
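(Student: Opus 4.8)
The plan is to combine the energy inequality that is part of the notion of admissible measure-valued solution with the specific, \emph{dry} character of the Coulomb friction $B=\partial\abs{\cdot}$.

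\textbf{Step 1 (energy dissipation).}
Let $M:=\int_{\T^n}\langle\nu_{0,x},h\rangle\,dx>0$ be the conserved total mass and put $\mathcal{D}(t):=\int_{\T^n}\langle\nu_{t,x},h\abs{u}\rangle\,dx$. For an admissible measure-valued solution the energy inequality has, schematically, the form
\begin{equation*}
\mathcal{E}(t)+d\int_0^t\mathcal{D}(s)\,ds\;\le\;\mathcal{E}(0)+\int_0^t\!\!\int_{\T^n}\langle\nu_{s,x},hu\rangle\cdot f\,dx\,ds ,
\end{equation*}
where $\mathcal{E}$ is the total (kinetic plus internal plus concentration) energy and $B(u)\cdot u=\abs u$ has been used to identify the friction dissipation. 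Since $\langle\nu_{s,x},hu\rangle\cdot f\le\norm{f}_\infty\,\langle\nu_{s,x},h\abs u\rangle$ pointwise and $\norm{f}_\infty<d$, this gives, with $\delta:=1-\norm{f}_\infty/d>0$,
\begin{equation*}
\mathcal{E}(t)+\delta d\int_0^t\mathcal{D}(s)\,ds\;\le\;\mathcal{E}(0) .
\end{equation*}
By two applications of Jensen's inequality $\mathcal{E}(t)\ge\int_{\T^n}\langle\nu_{t,x},ah^2\rangle\,dx\ge aM^2/\abs{\T^n}$, so $\mathcal{E}$ is bounded below; hence $\mathcal{D}\in L^1(0,\infty)$ and $\mathcal{E}(t)$ decreases to some limit $\mathcal{E}_\infty$.

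\textbf{Step 2 (finite-time extinction).}
The bound of Step~1 by itself only yields $\mathcal{D}\to0$ in a time-averaged sense --- a viscous friction term would merely give exponential decay. Extinction at a \emph{finite} time comes from the force $d\,B(u)$ having modulus at least $d$ off $\{u=0\}$: formally, dividing the momentum equation by $h$ gives $\partial_t u+(u\cdot\nabla)u+2a\nabla h+d\,B(u)\ni f$, so along the characteristics $\dot X=u$ one has $\tfrac{d}{dt}\abs u\le-\delta d+2a\abs{\nabla h}$, which, if the pressure gradient were controlled, would force $u\equiv0$ for all times past $\norm{u_0}_\infty/(\delta d)$ up to a correction. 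To proceed without pointwise control of $\nabla h$ one partitions $(0,\infty)$ into time windows of a fixed (acoustic) length and shows, using the energy balance to absorb the work of the pressure gradient, that on each window either the solution is identically at rest or a definite amount of ``velocity amplitude'' is removed; since the latter cannot happen infinitely often, there is a finite $T$, depending only on $a$, $d$, $f$ and the initial data, with $\mathcal{D}(t)=0$ --- hence the momentum vanishing --- for a.e.\ $t>T$.

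\textbf{Main obstacle.}
The real difficulty is that a measure-valued solution carries no regularity of the density, so the characteristic estimate $\tfrac{d}{dt}\abs u\le-\delta d+2a\abs{\nabla h}$ cannot be used as such; everything has to be phrased through the relative energy (as in Theorem~\ref{1}) and through quantities the admissibility inequality genuinely controls, with the friction kept as a maximal monotone graph in the spirit of~\cite{gwiazda2005}, including the contribution of possible concentrations. Concretely, what must be proved is a quantitative coercivity of the dissipation: on any time interval on which the solution is not identically at rest, $\int\mathcal{D}$ is bounded below by a positive quantity that does not degenerate as $\mathcal{E}-\mathcal{E}_\infty\to0$ --- exactly the property that distinguishes dry from viscous friction and, together with Step~1, forces $\mathcal{D}$ to be compactly supported in time. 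Establishing this coercivity, and pinning down the quantity that plays the role of the velocity amplitude together with its control by the data, is where the bulk of the work lies; the restriction to one or two space dimensions enters only through the existence theory for the measure-valued solutions being compared.
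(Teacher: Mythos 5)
Your Step 1 is exactly the first half of the paper's argument: the admissibility inequality together with $\norm{f}_\infty<d$ gives
\begin{equation*}
E_{mvs}(t)\le E_0-(d-\norm{f}_\infty)\int_0^t M(s)\,ds,\qquad M(t):=\int_{\T^2}\overline{h|u|}\,dx,
\end{equation*}
and this is where the dry (as opposed to viscous) character of $B=\partial|\cdot|$ enters: the dissipation rate is proportional to $M$ itself, not to $\int h|u|^2$. But your Step 2 contains a genuine gap. You correctly observe that $M\in L^1(0,\infty)$ is not enough, and you then sketch two routes --- a characteristics estimate $\frac{d}{dt}|u|\le-\delta d+2a|\nabla h|$, which you yourself concede is unavailable for measure-valued solutions, and a ``quantitative coercivity of the dissipation'' on time windows, which you explicitly leave unproved. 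Neither is carried out, so the finite-time extinction is not established.

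The missing idea is much simpler and purely pointwise: an interpolation bound of the momentum by the energy. Writing $h|u|=\sqrt{h}\cdot(\sqrt{h}|u|)$, Jensen's inequality for $|\cdot|^{4/3}$ followed by Young's inequality with exponents $3$ and $3/2$ gives
\begin{equation*}
M(t)^{4/3}\le\int_{\T^2}\bigl\langle \lambda_1^{2/3}|\lambda'|^{4/3},\nu_{t,x}\bigr\rangle\,dx
\le\frac13\int_{\T^2}\langle\lambda_1^2,\nu_{t,x}\rangle\,dx+\frac23\int_{\T^2}\langle|\lambda'|^2,\nu_{t,x}\rangle\,dx
\le C(a)\,E_{mvs}(t),
\end{equation*}
with no contribution from the concentration measure since the $2$-$2$-recession function of $\lambda_1^{2/3}|\lambda'|^{4/3}$ vanishes. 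Combined with Step 1 this closes the loop into a single integral inequality for $M$ alone,
\begin{equation*}
M(t)^{4/3}\le C(a)\Bigl(E_0-(d-\norm{f}_\infty)\int_0^t M(s)\,ds\Bigr),
\end{equation*}
whose comparison ODE $\tilde M'=-\tfrac34 C(a)(d-\norm{f}_\infty)\tilde M^{2/3}$ is sublinear near zero and hence reaches zero in finite time $T\le C E_0^{1/4}$; after that $M$ must vanish a.e.\ since it is nonnegative. This replaces both your characteristics heuristic and the unproved coercivity claim: the ``degenerate'' exponent $4/3<2$ in the interpolation is precisely what distinguishes dry from viscous friction here, and no pointwise control of $\nabla h$ or of $u$ is ever needed.
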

In particular, this implies that every admissible weak (distributional) solution becomes stationary after finite time. This highlights the importance of stationary solutions as well as the role played by the admissibility condition: Indeed, in~\cite{feireislgwiazdaswierczewska} non-admissible weak solutions were constructed whose momentum does not decrease to zero.  The result is a rigorous justification of empirical and numerical observations  of deposition of material after finite time,~\cite{SaHu91, Fe-Nietal2008, CoGuMo2012}. The finite-time  runout of solutions is essentially used at the modelling stage as  providing data for calibration of the system. This property was assumed in numerical simulations, however, to our knowledge, never proved. 

Let us remark that for the one-dimensional Savage-Hutter model, we obtain a fairly complete picture: Existence of admissible global in time weak solutions is known~\cite{Gw2002}, they enjoy weak-strong uniqueness, and become stationary after finite time.

Similarly, for the compressible Euler system in the one-dimensional case there exist global in time admissible weak solutions having the weak-strong uniqueness property, see~\cite{DiPe83, LiPeSo96, LiPeTa94}.

Finally, let us point out some difficulties in extending our results to other domains than the torus. On the whole space, we can no longer require the denstity $H$ to be uniformly bounded away from zero and the initial energy
\begin{equation*}
\int_{\R^n}\frac{1}{2}h_0|u_0|^2+ah_0^2dx
\end{equation*} 
(and similarly for~\eqref{eulerintro}) to be finite at the same time. On domains with physical boundaries, however, we do not even expect weak-strong uniqueness to hold, since a counterexample has been exhibited in~\cite{eulerboundary} in the case of the incompressible Euler equations.

\section{Notation}
We fix here some notation that will be used throughout the paper.

The $n$-dimensional torus will be denoted by $\T^n:=\R^n\slash \mathbb{Z}^n$.

Let $\Omega\subset\R^n$ be a measurable subset or $\Omega=\T^n$. The set of locally finite nonnegative measures on $\Omega$ will be denoted $\mathcal{M}^+(\Omega)$. If $X$ is a measurable subset of $\R^m$, then $\mathcal{P}(X)$ will be the set of probability measures on $X$. 

Let $m\in\mathcal{M}^+(\Omega)$. The space $L^\infty_w(\Omega,m;\mathcal{P}(X))$ is then defined as the space of maps $\nu:\Omega\to\mathcal{P}(X)$, $x\mapsto\nu_x$, which are \term{weakly* measurable} with respect to $m$; that is, for every $\phi\in C_c(X)$ the map 
\begin{equation*}
x\mapsto\int_{\Omega}\phi(\lambda)d\nu_x(\lambda)
\end{equation*}
is $m$-measurable. If $m$ is Lebesgue measure, we simply write $L^\infty_w(\Omega;\mathcal{P}(X))$. If $\Omega$ has the form $[0,T]\times\tilde{\Omega}$ for some measurable subset $\tilde{\Omega}\subset \R^n$, then $dx$ denotes $n$-dimensional Lebesgue measure and $dt$ one-dimensional Lebesgue measure. The Dirac mass centred at $x$ will be denoted as $\delta_x$, as usual.

The $m$-dimensional unit sphere is written $\mathbb{S}^m$. With $\bar{\Omega}$ we mean the topological closure of a subset of $\R^n$. We write $\R^+$ for the set of non-negative real numbers.

In the case $\Omega=[0,T]\times\tilde{\Omega}$, we will use measures of the form $m=m_t\otimes dt$; this means that, for every set of the form $\tau\times U$, where $\tau\subset[0,T]$ and $U\subset\tilde{\Omega}$ are measurable subsets,
\begin{equation*}
m(\tau\times U)=\int_\tau m_t(U)dt.
\end{equation*}

The differential operators $\nabla$ and $\diverg$ are applied only to the spatial variables. If $u$ and $v$ are vectors, then $u\otimes v$ denotes the matrix defined by $(u\otimes v)_{ij}=u_iv_j$. The divergence of a matrix field is understood to be taken row-wise.

Further notation will be introduced as we proceed.

\section{Generalised Young Measures}\label{young}
We recall briefly the notion of generalised Young measures, which were introduced by DiPerna and Majda~\cite{dipernamajda} and refined by Alibert and Bouchitt\'{e}~\cite{alibert}. Further details can be found e.g.\ in~\cite{k-r2, euleryoung}.

Young measures are used to represent weak limits of nonlinear functions of weakly convergent sequences. More precisely, suppose $\Omega\subset \R^n$  is a measurable set or $\Omega=\T^n$ ($n\geq1$), and $(u_k)_{n\in\mathbb{N}}$ is a sequence of maps bounded in $L^1(\Omega;\R^m)$ ($m\geq1$).

Then it was proved in~\cite{alibert} that there exists a subsequence (not relabeled) as well as a parametrised probability measure $\nu\in L_w^\infty(\Omega;\mathcal{P}(\R^m))$ (which is identical with the "classical" Young measure), a non-negative measure $m\in\mathcal{M}^+(\bar{\Omega})$, and a parametrized probability measure $\nu^\infty\in L_w^\infty(\Omega,m;\mathcal{P}(\mathbb{S}^{m-1}))$ such that
\begin{equation*}
f(x,u_n(x))dx\stackrel{*}{\rightharpoonup}\int_{\R^m}f(x,\lambda)d\nu_x(\lambda)dx+\int_{\mathbb{S}^{m-1}}f^\infty(x,\beta)d\nu^\infty_x(\beta)m
\end{equation*}
weakly* in the sense of measures. Here, $f:\Omega\times\R^m\to\R$ is any Carath\'{e}odory function (measurable in the first and continuous in the second argument) whose \term{recession function}
\begin{equation*}
f^\infty(x,\beta):=\lim_{{x'\rightarrow x\atop{\beta'\rightarrow
\beta\atop{s\rightarrow\infty}}}}\frac{f(x',s\beta')}{s}
\end{equation*} 
is a well-defined and continuous function on $\bar{\Omega}\times\mathbb{S}^{m-1}$. Note that such an $f$ will have at most linear growth. If its growth is sublinear, then $f^\infty=0$.

 Notice also that $\nu^\infty_{t,x}$ is only defined $m$-almost everywhere.

In order to properly define measure-valued solutions to compressible fluid equations within the framework of Alibert--Bouchitt\'{e}, we need a slight refinement which allows us to treat sequences whose components have different growth. Let $(u_k,w_k)_k$ be a sequence such that $(u_k)$ is bounded in $L^p(\Omega;\R^l)$ and $(w_k)$ is bounded in $L^q(\Omega;\R^m)$ ($1\leq p,q<\infty$). Define the ``nonhomogeneous unit sphere'' 
\begin{equation*}
\mathbb{S}^{l+m-1}_{p,q}:=\{(\beta_1,\beta_2)\in\R^{l+m}: |\beta_1|^{2p}+|\beta_2|^{2q}=1\}.
\end{equation*}

Then, there exists a a subsequence (not relabeled) and measures $\nu\in L_w^\infty(\Omega;\mathcal{P}(\R^{l+m}))$, $m\in\mathcal{M}^+(\bar{\Omega})$, $\nu^\infty\in L_w^\infty(\Omega,m;\mathcal{P}(\mathbb{S}^{l+m-1}_{p,q}))$ such that 
\begin{equation*}
f(x,u_n(x),w_n(x))dx\stackrel{*}{\rightharpoonup}\int_{\R^{l+m}}f(x,\lambda_1,\lambda_2)d\nu_x(\lambda_1,\lambda_2)dx+\int_{\mathbb{S}^{l+m-1}_{p,q}}f^\infty(x,\beta_1,\beta_2)d\nu^\infty_x(\beta_1,\beta_2)m
\end{equation*}
in the sense of measures; this is valid for all integrands $f$ whose $p$-$q$-recession function exists and is continuous on $\bar{\Omega}\times\mathbb{S}^{l+m-1}_{p,q}$. The $p$-$q$-recession function is defined as
\begin{equation*}
f^\infty(x,\beta_1,\beta_2):=\lim_{{x'\rightarrow x\atop{(\beta_1',\beta_2')\rightarrow
(\beta_1,\beta_2)\atop{s\rightarrow\infty}}}}\frac{f(x',s^q\beta_1',s^p\beta_2')}{s^{pq}}.
\end{equation*} 
The case $p=2$, $q=1$ was treated in Subsection 2.4.1 of~\cite{euleryoung} and the extension to general $p$ and $q$ is straightforward.

Let us quote another fact which is important for measure-valued solutions of time-dependent equations with bounded energy: If $\Omega=[0,T]\times\tilde{\Omega}$ for some measurable $\tilde{\Omega}\subset\R^n$ (or $\tilde{\Omega}=\T^n$), and if the sequence $(u_n,w_n)_n$ is bounded in $L^\infty([0,T];L^p(\tilde{\Omega})\times L^q(\tilde{\Omega}))$, then the corresponding concentration measure $m$ admits a disintegration of the form
\begin{equation*}
m=m_t(dx)\otimes dt,
\end{equation*}
where $t\mapsto m_t$ is bounded and measurable viewed as a map from $[0,T]$ into $\mathcal{M}^+(\overline{\tilde{\Omega}})$. The proof of this statement was given in~\cite{weak-strong}.

\section{Weak-strong uniqueness for measure-valued solutions of the compressible Euler equations}\label{Euler}
We consider the compressible Euler system
\begin{equation}\label{euler}
\begin{aligned} 
\partial_th+\diverg(hu)&=0\\
\partial_t(hu)+\diverg(hu\otimes u)+\nabla(\kappa h^\gamma)&=hG.
\end{aligned}
\end{equation}
Here, $h:[0,T]\times\T^n\to\R$, $u:[0,T]\times\T^n\to\R^n$, and $G:[0,T]\times\T^n\to\R^n$, $\gamma>1$. We set the constant $\kappa>0$ equal to one in order to save some writing, remarking however that all computations remain unchanged for general $\kappa$.

\subsection{Measure-valued solutions}
We apply the abstract framework from the previous section, with $l=1$, $m=n$, $p=\gamma$, and $q=2$, in order to define the notion of \term{measure-valued solution} of~\eqref{euler}. Consider a generalised Young measure 
\begin{equation*}
(\nu_{t,x},m,\nu^\infty_{t,x})\in L_w^\infty\left([0,T]\times\T^n;\mathcal{P}(\R^+\times\R^n)\right)\times\mathcal{M}^+([0,T]\times\T^n)\times L_w^\infty\left([0,T]\times\T^n,m;\mathcal{P}(\mathbb{S}^+)\right),
\end{equation*}
where we wrote
\begin{equation*}
\mathbb{S^+}:=\{(\beta_1,\beta')\in\mathbb{S}^{1+n}_{\gamma,2}: \beta_1\geq0\}.
\end{equation*}
We will use the variables $(\lambda_1,\lambda')\in\R^+\times\R^n$ and $(\beta_1,\beta')\in\mathbb{S}^+$ as dummy variables when integrating with respect to $\nu_{t,x}$ and $\nu_{t,x}^\infty$, respectively. One should think of $\lambda_1,\beta_1$ as representing $h$ and $\lambda', \beta'$ as representing $\sqrt{h}u$. We also use the common notation
\begin{equation*}
\langle F(\lambda_1,\lambda'),\nu_{t,x}\rangle:=\int_{\R^+\times\R^n}F(\lambda_1,\lambda')d\nu_{x,t}(\lambda_1,\lambda')
\end{equation*}
and analogously for $\nu^\infty$. 

If we consider a function $f:[0,T]\times\T^n\times\R^+\times\R^n\to\R$ which has an appropriate $\gamma$-2-recession function as defined in Section~\ref{young}, we use the shorthand notation
\begin{equation*}
\bar{f}(dtdx):=\langle f(t,x,\cdot,\cdot),\nu_{t,x}\rangle dtdx+\langle f^\infty(t,x,\cdot,\cdot),\nu_{t,x}^\infty\rangle m(dtdx).
\end{equation*}
For instance, we have
\begin{equation*}
\begin{aligned}
\bar{h}&= \langle\lambda_1,\nu\rangle\\
\overline{h^\gamma}&=\langle\lambda_1^\gamma,\nu\rangle+\langle\beta_1^\gamma,\nu^\infty\rangle m\\
\overline{hu}&=\langle\sqrt{\lambda_1}\lambda',\nu\rangle\\
\overline{hu\otimes u}&=\langle\lambda'\otimes\lambda',\nu\rangle+\langle\beta'\otimes\beta',\nu^\infty\rangle m\\
\overline{h|u|^2}&=\langle|\lambda'|^2,\nu\rangle+\langle|\beta'|^2,\nu^\infty\rangle m\\
\overline{hG}&=\langle\lambda_1G,\nu\rangle=\bar{h}G.
\end{aligned}
\end{equation*}
We say that $(\nu,m,\nu^\infty)$ is a \term{measure-valued solution} of~\eqref{euler} with initial data $(h_0,u_0)$ if for every $\tau\in[0,T]$, $\psi\in C^1([0,T]\times\T^n;\R)$, $\phi\in C^1([0,T]\times\T^n;\R^n)$ it holds that
\begin{equation}\label{Emass_momentum}
\begin{aligned}
\int_0^\tau\int_{\T^n}\partial_t\psi \bar{h}+\nabla\psi\cdot\overline{hu}dxdt+\int_{\T^n}\psi(x,0)h_0-\psi(x,\tau)\bar{h}(x,\tau)dx&=0,\\
\int_0^\tau\int_{\T^n}\partial_t\phi\cdot\overline{hu}+\nabla\phi : \overline{hu\otimes u}+\diverg\phi\overline{h^\gamma}
-\phi\cdot\overline{hG}&dxdt\\
+\int_{\T^n}\phi(x,0)\cdot h_0u_0-\phi(x,\tau)\cdot\overline{hu}(x,\tau)dx&=0.
\end{aligned}
\end{equation}
It is part of the definition that all the integrals have to exist for any choice of test functions, in particular for the initial data we require $h_0\in L^1$, $h_0u_0\in L^1$.

Let us set
\begin{equation*}
E_{mvs}(t):=\int_{\T^n}\frac{1}{2}\overline{h|u|^2}(t,x)+\frac{1}{\gamma-1}\overline{h^\gamma}(t,x)dx
\end{equation*}
for almost every $t$, and
\begin{equation*}
E_0:=\int_{\T^n}\frac{1}{2}h_0|u_0|^2(x)+\frac{1}{\gamma-1}h_0^\gamma(x)dx.
\end{equation*}
We then say that a measure-valued solution is \term{admissible} if
\begin{equation}\label{EEmvsenergy}
\begin{aligned}
E_{mvs}(t)\leq E_0+\int_0^t\int_{\T^n}\overline{hG\cdot u}(s,x)dxds
\end{aligned}
\end{equation}
in the sense of distributions. An elementary computation yields the well-known fact that the energy is conserved for smooth solutions (i.e.~\eqref{EEmvsenergy} holds with equality), whereas the inequality becomes strict upon the formation of shocks.

\begin{remark}
The global existence of measure-valued solutions for \eqref{euler} was proved by Neustupa in \cite{neustupa}. However he used a different formulation of the Young measure, as the formalism of Alibert-Bouchitt\'{e} \cite{alibert} was not yet available. One can however rewrite the solutions of \cite{neustupa} in the form presented here. Neustupa's solutions can be seen to be admissible, as they can be obtained e.g.\ from an artificial viscosity approximation.
\end{remark}

\subsection{Weak-Strong Uniqueness}
\begin{theorem}\label{Eweak-strong}
Let $G\in L^{\infty}([0,T];L^2(\T^n))$ and suppose $H\in W^{1,\infty}([0,T]\times\T^n), U\in C^1([0,T]\times\T^n)$ is a solution of~\eqref{euler} with initial data $h_0\geq c>0$, $h_0\in L^\gamma(\T^n)$, $h_0|u_0|^2\in L^1(\T^n)$, and $H(x,t)\geq c>0$ for some constant $c$ and all $(t,x)\in[0,T]\times\T^n$. If $(\nu,m,\nu^\infty)$ is an admissible measure-valued solution with the same initial data, then 
\begin{equation*}
\nu_{t,x}=\delta_{(H(t,x),\sqrt{H(t,x)}U(t,x))} \text{ for a.e. $t,x$, and $m=0$.}
\end{equation*} 
\end{theorem}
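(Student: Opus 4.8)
The plan is to adapt the relative-energy method (a Dafermos-type relative entropy argument, as used for compressible Navier--Stokes in~\cite{feireisljinnovotny}) to the generalised Young measure setting. Set $P(h):=\frac{1}{\gamma-1}h^\gamma$, so that up to an additive constant $h^\gamma=hP'(h)-P(h)$, and let $P(\lambda_1\mid H):=P(\lambda_1)-P(H)-P'(H)(\lambda_1-H)$ be the relative internal energy. I would introduce
\[
\mathcal{E}(\tau):=\int_{\T^n}\Bigl\langle\tfrac12\bigl|\lambda'-\sqrt{\lambda_1}\,U(\tau,x)\bigr|^2+P\bigl(\lambda_1\mid H(\tau,x)\bigr),\nu_{\tau,x}\Bigr\rangle dx+\int_{\T^n}\Bigl\langle\tfrac12|\beta'|^2+\tfrac{1}{\gamma-1}\beta_1^\gamma,\nu^\infty_{\tau,x}\Bigr\rangle m_\tau(dx).
\]
Three preliminary facts are needed. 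First, $\mathcal{E}(\tau)\ge0$: the first integrand is nonnegative by convexity of $P$, and the second is bounded below by a strictly positive constant on $\mathbb{S}^+$ because $\beta_1^{2\gamma}+|\beta'|^4=1$ there; in particular $\mathcal{E}(\tau)$ dominates a fixed multiple of $m_\tau(\T^n)$. Second, since both solutions have the same initial data $(h_0,u_0)$ with $h_0\in L^\gamma$ and $h_0|u_0|^2\in L^1$, the initial Young measure is $\nu_{0,x}=\delta_{(h_0(x),\sqrt{h_0(x)}u_0(x))}$ with no concentration, so $\mathcal{E}(0)=0$. Third, $\overline{hu}=\langle\sqrt{\lambda_1}\lambda',\nu\rangle$ and $\bar h=\langle\lambda_1,\nu\rangle$ carry no concentration part (their integrands are $\gamma$-$2$-sublinear, as $\gamma>1$), whereas $\overline{hu\otimes u}$ and $\overline{h^\gamma}$ do.

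Next I would establish a Gr\"onwall inequality for $\mathcal{E}$. Rewrite $\mathcal{E}(\tau)=E_{mvs}(\tau)+\int_{\T^n}\bigl(-\overline{hu}\cdot U+\tfrac12\bar h|U|^2-\tfrac{\gamma}{\gamma-1}H^{\gamma-1}\bar h+H^\gamma\bigr)(\tau)\,dx$, bound $E_{mvs}(\tau)$ by the admissibility inequality~\eqref{EEmvsenergy} (for a.e.\ $\tau$, after testing against an approximation of $\ONE_{[0,\tau]}$), and express the remaining time-$\tau$ boundary integrals through the weak formulation~\eqref{Emass_momentum}: test the momentum equation with $\phi=U$, the mass equation with $\psi=\tfrac12|U|^2$ and with $\psi=-\tfrac{\gamma}{\gamma-1}H^{\gamma-1}$ (since $H$ is only Lipschitz, these $H$-dependent test functions are first mollified, the limit being justified by $\nabla H\in L^\infty$), and add the identity $\int_{\T^n}(H^\gamma(\tau)-h_0^\gamma)\,dx=\int_0^\tau\int_{\T^n}\gamma H^{\gamma-1}\partial_tH\,dx\,dt$. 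Now use that $(H,U)$ solves~\eqref{euler} classically: $\partial_tH=-\diverg(HU)$ and, after eliminating $U\,\partial_tH$ via the mass equation, $\partial_tU=-(U\cdot\nabla)U-\tfrac{\gamma}{\gamma-1}\nabla(H^{\gamma-1})+G$ — here the lower bound $H\ge c>0$ is used, to divide by $H$. Substituting and collecting, the kinetic cross-terms assemble into $-\int_0^\tau\int_{\T^n}\langle(\lambda'-\sqrt{\lambda_1}U)\otimes(\lambda'-\sqrt{\lambda_1}U),\nu\rangle:\nabla U\,dx\,dt-\int_0^\tau\int_{\T^n}\langle\beta'\otimes\beta',\nu^\infty\rangle:\nabla U\,m$, each bounded by $\|\nabla U\|_\infty$ times a part of $\mathcal{E}$; the pressure terms assemble into $-\int_0^\tau\int_{\T^n}\diverg U\,\bigl(\langle P(\lambda_1\mid H),\nu\rangle+\tfrac{1}{\gamma-1}\langle\beta_1^\gamma,\nu^\infty\rangle m\bigr)\,dx\,dt$ plus remainder terms carrying a factor $\nabla H$; and the force yields $\int_0^\tau\int_{\T^n}\overline{h(u-U)\cdot G}\,dx\,dt$, handled by the Cauchy--Schwarz inequality together with the energy bound $\overline{h^\gamma}\in L^\infty_tL^1_x$ and the regularity of $G$. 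This gives $\mathcal{E}(\tau)\le\mathcal{E}(0)+\int_0^\tau C\,\mathcal{E}(t)\,dt$ with $C$ depending only on $c$, $\|H\|_{W^{1,\infty}}$, $\|U\|_{C^1}$ and $G$, hence $\mathcal{E}\equiv0$. Finally $\mathcal{E}(\tau)=0$ forces $m_\tau=0$, forces $\nu_{\tau,x}$ to be concentrated on the zero set of $\lambda_1\mapsto P(\lambda_1\mid H(\tau,x))$, i.e.\ on $\{\lambda_1=H(\tau,x)\}$ by strict convexity of $P$, and then forces $\lambda'=\sqrt{H(\tau,x)}\,U(\tau,x)$; thus $\nu_{\tau,x}=\delta_{(H(\tau,x),\sqrt{H(\tau,x)}U(\tau,x))}$ for a.e.\ $(\tau,x)$ and $m=0$.

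The step I expect to be most delicate is the absorption of the pressure remainder together with the bookkeeping of concentrations. The residual pressure terms — those involving $\nabla H$ and the difference $\bar h-H$ — must be estimated by $C\mathcal{E}$, which requires a quantitative convexity bound of the form $|\lambda_1-H|^2\,\ONE_{\{\lambda_1\le2\|H\|_\infty\}}+\lambda_1^\gamma\,\ONE_{\{\lambda_1>2\|H\|_\infty\}}\le C\,P(\lambda_1\mid H)$ with constant uniform in $(t,x)$; this is exactly where the two-sided bound $0<c\le H\le\|H\|_\infty$, available on the torus, is essential. In parallel, unlike in the classical weak-solution case, one must track at every step which averaged products carry a concentration contribution, so that the $m$-part of $\mathcal{E}$ closes the Gr\"onwall loop by itself; checking that the recession terms produced by testing~\eqref{Emass_momentum} with $H$ and $U$ match those appearing in $\mathcal{E}$ is the main new point relative to~\cite{feireisljinnovotny}, and the restriction $\gamma>1$ (which makes $\sqrt{\lambda_1}\lambda'$ and $\lambda_1$ $\gamma$-$2$-sublinear) is used precisely here.
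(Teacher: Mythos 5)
Your proposal is essentially the paper's own proof: your functional $\mathcal{E}$ coincides with the paper's relative energy $E_{rel}$ (your $P(\lambda_1\mid H)$ is exactly the integrand $\tfrac{1}{\gamma-1}\lambda_1^\gamma-\tfrac{\gamma}{\gamma-1}H^{\gamma-1}\lambda_1+H^\gamma$, including the same concentration part), the test functions $\phi=U$, $\psi=\tfrac12|U|^2$, $\psi=\gamma H^{\gamma-1}$ are the same up to a constant factor, and the Gr\"onwall closure is identical. One caveat: the force term should not be estimated by Cauchy--Schwarz, since with $G$ only in $L^{\infty}([0,T];L^2(\T^n))$ the resulting quantity $\int_{\T^n}\bar h|G|^2dx$ is not controlled by the energy (one would need $G\in L^{2\gamma/(\gamma-1)}_x$); instead, after substituting the momentum equation for $(H,U)$ into $\partial_tU\cdot\overline{h(U-u)}$, the contribution $G\cdot\overline{h(U-u)}$ cancels $\overline{hG\cdot(u-U)}$ exactly --- this is the combination of~\eqref{Eequality2} and~\eqref{Emomentum} in the paper. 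Similarly, the $\nabla H$ remainders cancel exactly against the pressure gradient in the momentum equation for $(H,U)$, so the pressure terms assemble into $(\gamma-1)\diverg U$ times the relative pressure already present in $\mathcal{E}$, and no quantitative convexity bound beyond $P(\cdot\mid H)\ge0$ is needed.
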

\begin{proof}
Let us first define for a.e.\ $t\in[0,T]$ the \term{relative energy} between $(H,U)$ and the measure-valued solution as
\begin{equation*}
\begin{aligned}
E_{rel}(t)&=\int_{\T^n}\frac{1}{2}\overline{h|u-U|^2}+\overline{\frac{1}{\gamma-1}h^\gamma-\frac{\gamma}{\gamma-1}H^{\gamma-1}h+H^\gamma}dx\\
&=\int_{\T^n}\frac{1}{2}\langle|\lambda'-\sqrt{\lambda_1}U|^2,\nu_{t,x}\rangle dx+\frac{1}{2}\int_{\T^n}\langle|\beta'|^2,\nu_{t,x}^\infty\rangle dm_t(x)\\
&\hspace{1cm}+\int_{\T^n}\langle\frac{1}{\gamma-1}\lambda_1^\gamma-\frac{\gamma}{\gamma-1}H^{\gamma-1}\lambda_1+H^\gamma,\nu_{t,x}\rangle dx\\
&\hspace{1cm}+\int_{\T^n} \frac{1}{\gamma-1}\langle\beta_1^\gamma,\nu^\infty_{t,x}\rangle dm_t(x).
\end{aligned}
\end{equation*}
Here, the measure $m_t\in\mathcal{M}^+(\T^n)$ is obtained by the disintegration $m(dtdx)=m_t(dx)\otimes dt$, which is well-defined thanks to the admissibility (cf.\ Section~\ref{young}). Note that the strict convexity of $|\cdot|^\gamma$ implies that the relative energy is always non-negative. Then it is straightforward to observe that $E_{rel}(t)=0$ for a.e.\ $t$ implies Theorem~\ref{Eweak-strong}. Indeed, defining the projection operators $\pi^{\lambda_1}:(\lambda_1, \lambda')\mapsto \lambda_1$ and $\pi^{\lambda'}:(\lambda_1, \lambda')\mapsto \lambda'$, we observe that the strict convexity of  $|\cdot|^\gamma$ implies that $\pi^{\lambda_1}\nu_{t,x}=\delta_{H(t,x)}$ for a.e. $t,x$ and hence $\nu_{t,x}=\delta_{(H(t,x))}\otimes \pi^{\lambda'}\nu_{t,x}$. Using the first term in the relative energy allows to conclude $\pi^{\lambda'}\nu_{t,x}=\delta_{(\sqrt{H(t,x)}U(t,x))}$.

Setting $\phi=U$ in the momentum equation (the second equation of~\eqref{Emass_momentum}), we obtain 
\begin{equation}\label{Etest1}
\begin{aligned}
\int_{\T^n}\overline{hu}\cdot U(\tau)dx=&\int_{\T^n}h_0|u_0|^2dx+\int_0^\tau\int_{\T^n}\overline{hu}\cdot\partial_t U+\overline{hu\otimes u}:\nabla U dxdt\\
&+\int_0^\tau\int_{\T^n}\overline{h^\gamma}\diverg U+\overline{hG}\cdot U dxdt.
\end{aligned}
\end{equation}
Similarly, setting $\psi=\frac{1}{2}|U|^2$ and then $\psi=\gamma H^{\gamma-1}$ in~\eqref{Emass_momentum} yields
\begin{equation}\label{Etest2}
\frac{1}{2}\int_{\T^n}|U(\tau)|^2\bar{h}(\tau,x)dx=\int_0^\tau\int_{\T^n}U\cdot\partial_tU \bar{h}+\nabla U U\cdot\overline{hu}dxdt+\int_{\T^n}\frac{1}{2}|u_0|^2h_0dx
\end{equation}
and
\begin{equation}\label{Etest3}
\int_{\T^n}\gamma H^{\gamma-1}(\tau)\bar{h}(\tau)dx=\int_0^\tau\int_{\T^n}\gamma(\gamma-1)H^{\gamma-2}\partial_tH \bar{h}+\gamma(\gamma-1)H^{\gamma-2}\nabla H \cdot\overline{hu}dxdt+\int_{\T^n}\gamma h_0^\gamma dx,
\end{equation}
respectively.

Next, we can write the relative energy as
\begin{equation*}
\begin{aligned}
E_{rel}(\tau)&=\int_{\T^n}\frac{1}{2}\overline{h|u|^2}+\frac{1}{\gamma-1}\overline{h^\gamma}dx + \int_{\T^n}H^\gamma dx+\frac{1}{2}\int_{\T^n}|U|^2\bar{h}dx-\int_{\T^n}U\cdot\overline{hu}dx-\int_{\T^n}\frac{\gamma}{\gamma-1}H^{\gamma-1}\bar{h}dx\\
&=E_{mvs}(\tau)+ \int_{\T^n}H^\gamma dx+\frac{1}{2}\int_{\T^n}|U|^2\bar{h}dx-\int_{\T^n}U\cdot\overline{hu}dx-\int_{\T^n}\frac{\gamma}{\gamma-1}H^{\gamma-1}\bar{h}dx
\end{aligned}
\end{equation*}
(all integrands evaluated at time $\tau$). Next, using the balances~\eqref{Etest1},~\eqref{Etest2},~\eqref{Etest3} for the last three integrals, we obtain
\begin{equation*}
\begin{aligned}
E_{rel}(\tau)=E_{mvs}(\tau)&+\int_{\T^n}H^\gamma dx\\
&+\int_0^\tau\int_{\T^n}U\cdot\partial_tU \bar{h}+\nabla U U\cdot\overline{hu}dxdt+\int_{\T^n}\frac{1}{2}|u_0|^2h_0dx\\ 
&-\int_{\T^n}h_0|u_0|^2dx-\int_0^\tau\int_{\T^n}\overline{hu}\cdot\partial_t U+\overline{hu\otimes u}:\nabla U dxdt\\
&-\int_0^\tau\int_{\T^n}\overline{h^\gamma}\diverg U-\overline{hG}\cdot U dxdt\\
&-\int_0^\tau\int_{\T^n}(\gamma H^{\gamma-2}\partial_tH \bar{h}+\gamma H^{\gamma-2}\nabla H \cdot\overline{hu})dxdt-\int_{\T^n}\frac{\gamma}{\gamma-1} h_0^\gamma dx,
\end{aligned}
\end{equation*}
and using~\eqref{Emvsenergy} we have, for a.e.\ $\tau$,
\begin{equation}\label{Eintermediatestep}
\begin{aligned}
E_{rel}(\tau)\leq& -\int_{\T^n}h_0^\gamma dx +\int_0^\tau\int_{\T^n}\overline{hG\cdot u} +\int_{\T^n}H^\gamma dx\\
&+\int_0^\tau\int_{\T^n}U\cdot\partial_tU \bar{h}+\nabla U U\cdot\overline{hu}dxdt\\ 
&-\int_0^\tau\int_{\T^n}\overline{hu}\cdot\partial_t U+\overline{hu\otimes u}:\nabla U dxdt\\
&-\int_0^\tau\int_{\T^n}\overline{h^\gamma}\diverg U-\overline{hG}\cdot U dxdt\\
&-\int_0^\tau\int_{\T^n}(\gamma H^{\gamma-2}\partial_tH \bar{h}+\gamma H^{\gamma-2}\nabla H \cdot\overline{hu})dxdt.
\end{aligned}
\end{equation}
Next, we collect some terms and write
\begin{equation}\label{Eequality1}
\begin{aligned}
\int_{\T^n}&H^\gamma dx -\int_{\T^n}h_0^\gamma dx-\int_0^\tau\int_{\T^n}\gamma H^{\gamma-2}\partial_tH \bar{h}dxdt\\
&=\int_0^\tau\int_{\T^n}\frac{d}{dt}H^\gamma-\gamma H^{\gamma-2}\partial_tH \bar{h}dxdt\\
&=\int_0^\tau\int_{\T^n}\gamma H^{\gamma-1}\partial_tH-\gamma H^{\gamma-2}\partial_tH \bar{h}dxdt\\
&=\int_0^\tau\int_{\T^n}\gamma H^{\gamma-2}\partial_tH(H-\bar{h})dxdt,
\end{aligned}
\end{equation}

\begin{equation}\label{Eequality2}
\begin{aligned}
\int_0^\tau\int_{\T^n}&\overline{hG\cdot u}-\overline{hG}\cdot U dxdt\\
&=\int_0^\tau\int_{\T^n}\overline{hG\cdot (u-U)}dxdt,
\end{aligned}
\end{equation}
and
\begin{equation}\label{Eequality3}
\begin{aligned}
\int_0^\tau\int_{\T^n}&U\cdot\partial_tU \bar{h}+\nabla U U\cdot\overline{hu}-\overline{hu}\cdot\partial_t U-\overline{hu\otimes u}:\nabla U dxdt\\
&=\int_0^\tau\int_{\T^n}\partial_tU\cdot \overline{h(U-u)}+\nabla U :\overline{hu\otimes(U-u)}dxdt.
\end{aligned}
\end{equation}
Indeed, the last two equalities can be verified by writing the expressions in the "coarse-grained" overline notation explicitly in terms of the Young measure $(\nu,m,\nu^\infty)$. 

Plugging equalities~\eqref{Eequality1},~\eqref{Eequality2},~\eqref{Eequality3} into~\eqref{Eintermediatestep}, we arrive at
\begin{equation}\label{Eintermediatestep2}
\begin{aligned}
E_{rel}(\tau)\leq&\int_0^\tau\int_{\T^n}\gamma H^{\gamma-2}\partial_tH(H-\bar{h})dxdt \\
&+\int_0^\tau\int_{\T^n}\overline{hG\cdot (u-U)}dxdt\\
&+\int_0^\tau\int_{\T^n}\partial_tU\cdot \overline{h(U-u)}+\nabla U :\overline{hu\otimes(U-u)}dxdt\\
&-\int_0^\tau\int_{\T^n}\overline{h^\gamma}\diverg U dxdt-\int_0^\tau\int_{\T^n}\gamma H^{\gamma-2}\nabla H \cdot\overline{hu}dxdt.
\end{aligned}
\end{equation}

For the last two integrals, we have by the divergence theorem
\begin{equation}\label{Eh^2estimate}
\begin{aligned}
-\int_0^\tau\int_{\T^n}&\overline{h^\gamma}\diverg U dxdt-\int_0^\tau\int_{\T^n}\gamma H^{\gamma-2}\nabla H \cdot\overline{hu}dxdt\\
&=\int_0^\tau\int_{\T^n}-\overline{h^\gamma}\diverg U +\gamma H^{\gamma-2}\nabla H \cdot(HU-\overline{hu})-\gamma H^{\gamma-2}\nabla H\cdot HUdxdt\\
&=\int_0^\tau\int_{\T^n}(H^\gamma-\overline{h^\gamma})\diverg U +\gamma H^{\gamma-2}\nabla H \cdot(HU-\overline{hu})dxdt.\\
\end{aligned}
\end{equation}
Inserting this back into~\eqref{Eintermediatestep2} and observing that, by the mass equation for $(H,U)$, 
\begin{equation*}
\begin{aligned}
\gamma &H^{\gamma-2}\partial_tH(H-\bar{h})+\gamma H^{\gamma-2}\diverg UH(H-\bar{h}) +\gamma H^{\gamma-2}\nabla H \cdot HU
=\gamma H^{\gamma-2}U\cdot\nabla H\bar{h},
\end{aligned}
\end{equation*}
we get
\begin{equation}\label{Eintermediatestep3}
\begin{aligned}
E_{rel}(\tau)\leq&\int_0^\tau\int_{\T^n}\gamma H^{\gamma-2}\cdot\nabla H\overline{h(U-u)}dxdt \\
&+\int_0^\tau\int_{\T^n}\overline{hG\cdot (u-U)}dxdt\\
&+\int_0^\tau\int_{\T^n}\partial_tU\cdot \overline{h(U-u)}+\nabla U :\overline{hu\otimes(U-u)}dxdt\\
&-\int_0^\tau\int_{\T^n}\gamma H^{\gamma-1}\diverg U(H-\overline{h} )dxdt+\int_0^\tau\int_{\T^n}(H^\gamma-\overline{h^\gamma})\diverg U.
\end{aligned}
\end{equation}
The expression in the third line can be rewritten pointwise as 
\begin{equation}\label{Epointwiseest}
\begin{aligned}
\partial_tU&\cdot \overline{h(U-u)}+\nabla U :\overline{hu\otimes(U-u)}\\
&=\partial_tU\cdot \overline{h(U-u)}+\nabla U :\overline{hU\otimes(U-u)}+\nabla U :\overline{h(u-U)\otimes(U-u)},
\end{aligned}
\end{equation}
and the integral of the last term as well as the last line in~\eqref{Eintermediatestep3} can both be estimated by
\begin{equation}\label{Eabsorb}
C\norm{U}_{C^1}\int_0^\tau E_{rel}(t)dt.
\end{equation}
For the remaining terms in~\eqref{Epointwiseest} we obtain, using the momentum equation for $(H,U)$,
\begin{equation}\label{Emomentum}
\begin{aligned}
\partial_tU&\cdot \overline{h(U-u)}+\nabla U :U\otimes\overline{h(U-u)}\\
&=\frac{1}{H}(\partial_t(HU)+\diverg(HU\otimes U))\cdot\overline{h(U-u)}\\
&=G\cdot\overline{h(U-u)} - \gamma H^{\gamma-2}\nabla H\cdot\overline{h(U-u)}.
\end{aligned}
\end{equation}
Putting together~\eqref{Eintermediatestep3},~\eqref{Eabsorb}, and~\eqref{Emomentum}, we obtain
\begin{equation*}
E_{rel}(\tau)\leq C\norm{U}_{C^1}\int_0^\tau E_{rel}(t)dt.
\end{equation*}
Finally,  from Gronwall's inequality it follows that $E_{rel}(\tau)=0$ for a.e. $t$.
\end{proof}

\section{Savage-Hutter system} 
We consider the two-dimensional Savage-Hutter model
\begin{equation}\label{savhut}
\begin{aligned} 
\partial_th+\diverg(hu)&=0\\
\partial_t(hu)+\diverg(hu\otimes u)+\nabla(ah^2)&=h\left(-dB(u)+f\right).
\end{aligned}
\end{equation}
The one-dimensional case can be treated similarly. Here, $h:[0,T]\times\T^2\to\R$, $u:[0,T]\times\T^2\to\R^2$, $f:[0,T]\times\T^2\to\R^2$, and $a>0$ and $d>0$ are constant. By $B(u)$ we denote the subdifferential of $u\mapsto |u|$, so that $B(u)$ is multi-valued such that
\begin{equation*}
B(u)=\begin{cases}
\frac{u}{|u|} & \text{if $u\neq0$,}\\[0.2cm]
\overline{B_1(0)} & \text{if $u=0$.}
\end{cases}
\end{equation*}
Consequently, the equality sign in the second line of~\eqref{savhut} should really be an inclusion. We will stick however to the formulation~\eqref{savhut}, thereby slightly abusing notation.

\subsection{Stationary solutions}If in~\eqref{savhut} $f$ is independent of time, then a special class of solutions is given by $u\equiv0$ and any $h=h(x)>c$ such that
\begin{equation*}
\left|\nabla h-\frac{f}{2a}\right|\leq\frac{d}{2a}\hspace{0.3cm}\text{for a.e.\ $x$.}
\end{equation*}

\subsection{Measure-Valued Solutions}
We recall  the notion of \term{measure-valued solution} of~\eqref{savhut} from \cite{gwiazda2005} in the notation used therein (in fact, there the problem was treated on the whole space, but it can easily be adapted to the torus). The author considers system~\eqref{savhut} with a right-hand side given by $h\tilde f(x, \sqrt{h} u)$, where
\begin{equation*}
\tilde f(x,\sqrt{h} u)=-d\frac{\sqrt h u}{\sqrt h |u|}+f
\end{equation*}
and for $u=0$ the mapping $\tilde f$ takes values in the closed unit ball. 
 To handle this multi-valued (monotone) term, let us first recall from~\cite{gwiazda2005} the following observation, see also \cite{GwZa2007, BuGwMaSw2009} for a similar approach.

\begin{lemma}\label{monotoniczna-ciaglosc}
Let $f:\R^n \rightarrow \R^n$ ($f:\R^n \rightarrow 2^{\R^n}$)  be a monotone
function (monotone mapping).
Then $$(f+Id)^{-1}:\R^n\rightarrow \R^n $$
and
$$f\circ (f+Id)^{-1}:\R^n\rightarrow \R^n $$
are Lipschitz functions. Above we understand  $f\circ(f+Id)^{-1}=Id-Id\circ(f+Id)^{-1}.$

Moreover, for any continuous function $g:\R^n \rightarrow \R^n$,
$$g\circ (f+Id)^{-1}:\R^n\rightarrow \R^n$$
is a continuous function.
\end{lemma}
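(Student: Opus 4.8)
The plan is to recognize $(f+\mathrm{Id})^{-1}$ as the \emph{resolvent} and $f\circ(f+\mathrm{Id})^{-1}$ as the \emph{Yosida regularization} of $f$, so that the lemma becomes a finite-dimensional instance of the classical theory of maximal monotone operators; in the application $f=B=\partial(\absn{\cdot})$ is a subdifferential, hence maximal monotone. The argument is identical in the single-valued and multi-valued cases, so I treat $f$ as a (possibly multi-valued) maximal monotone operator and read $f(x)$ as a set.

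First I would record that $\mathrm{Id}+f$ maps onto $\R^n$. This is Minty's theorem; in finite dimensions it can be obtained directly, e.g.\ by approximating $f$ by its Yosida regularizations and applying Brouwer's fixed point theorem, or simply quoted. This is the only place where maximality, and not mere monotonicity, is used, and it is exactly what makes $(f+\mathrm{Id})^{-1}$ defined on all of $\R^n$. Next, given $y_1,y_2\in\R^n$, choose $x_i$ with $y_i-x_i\in f(x_i)$; monotonicity of $f$ applied to $y_i-x_i\in f(x_i)$ yields $\dpr{(y_1-x_1)-(y_2-x_2),x_1-x_2}\geq 0$, i.e.
\[
\norm{x_1-x_2}^2\leq\dpr{y_1-y_2,x_1-x_2}\leq\norm{y_1-y_2}\,\norm{x_1-x_2}.
\]
Taking $y_1=y_2$ shows the preimage is a single point, so $(f+\mathrm{Id})^{-1}$ is a genuine function, and the displayed inequality shows it is $1$-Lipschitz.

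For the second map, set $x_i=(f+\mathrm{Id})^{-1}(y_i)$ and $w_i:=y_i-x_i\in f(x_i)$; this $w_i$ is a legitimate element of $f\big((f+\mathrm{Id})^{-1}(y_i)\big)$, and selecting it as the value makes $f\circ(f+\mathrm{Id})^{-1}$ single-valued and equal to $\mathrm{Id}-(f+\mathrm{Id})^{-1}$, as asserted in the statement. Using $x_i=y_i-w_i$ together with monotonicity of $f$ applied to $w_i\in f(x_i)$ gives $\dpr{w_1-w_2,y_1-y_2}\geq\norm{w_1-w_2}^2$, hence $\norm{w_1-w_2}\leq\norm{y_1-y_2}$ by Cauchy--Schwarz, so this map is also $1$-Lipschitz. (Equivalently: the resolvent is firmly nonexpansive, and therefore so is $\mathrm{Id}$ minus it.) Finally, for continuous $g$, the map $g\circ(f+\mathrm{Id})^{-1}$ is continuous simply as a composition of the Lipschitz map $(f+\mathrm{Id})^{-1}$ with the continuous map $g$.

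The only genuine obstacle is the surjectivity of $\mathrm{Id}+f$, that is, checking that the resolvent is everywhere defined; all the Lipschitz and continuity statements are then one-line consequences of monotonicity and Cauchy--Schwarz. For the present paper this obstacle is absent in substance, since $B$ is a subdifferential and hence automatically maximal monotone, so Minty's theorem applies off the shelf.
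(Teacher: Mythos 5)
Your proof is correct, and it is worth noting that the paper itself offers no proof of this lemma: it is simply recalled from the cited reference on measure-valued solutions of the Savage--Hutter system, so there is nothing in the source to compare against line by line. Your argument is the canonical one --- the resolvent is firmly nonexpansive, hence $1$-Lipschitz, and the Yosida-type map $\mathrm{Id}-(f+\mathrm{Id})^{-1}$ inherits the same bound from the inequality $\langle w_1-w_2,\,y_1-y_2\rangle\geq\|w_1-w_2\|^2$ --- and all the estimates check out. Your most valuable observation is the one you flag explicitly: plain monotonicity does \emph{not} guarantee that $f+\mathrm{Id}$ is surjective (e.g.\ a monotone step function on $\R$ whose graph has a gap leaves a hole in the range of $f+\mathrm{Id}$), so the lemma as literally stated is slightly too generous; one needs maximal monotonicity and Minty's theorem for $(f+\mathrm{Id})^{-1}$ to be defined on all of $\R^n$, or else one must read the conclusion as holding on the range of $f+\mathrm{Id}$. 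Since the operator actually used in the paper is $B=\partial|\cdot|$, a subdifferential of a convex function and hence maximal monotone, this gap is harmless in context, and your proof covers everything the paper needs.
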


Under the assumptions that 
 $(h_0,h_0u_0) \in L^1_{loc}(\R^2),$
$\int_{\R^2}\left\{\frac{1}{2} |u_0|^2h_0+a(h_0)^2 \right\}dx<\infty$,
there exists a triple of measures
\begin{equation*}
(\mu_{t,x},m,\mu^\infty_{t,x})\in L_w^\infty\left([0,T]\times\T^2;\mathcal{P}(\R^+\times\R^2)\right)\times\mathcal{M}^+([0,T]\times\T^2)\times L_w^\infty\left([0,T]\times\T^2,m;\mathcal{P}(\mathbb{S}^+)\right),
\end{equation*}
%
%
such that
\begin{equation*}
\begin{aligned}
\int_{[0,T)}\int_{\R^2}\overline{h}\partial_t \varphi_1
+\overline{m}\cdot \nabla_x
\varphi_1 dxdt&=\int_{\R^2}h_0\varphi_1(0)dx,\\
\int_{[0,T)} \left\{ \int_{\R^2}\overline{m}\partial_t \varphi_2 dx+
\langle \overline{e}+a \overline{p},\nabla_x\varphi_2 \rangle -
\int_{\R^2}\overline{f}\varphi_2 dx \right\} dt
&=\int_{\R^2}h_0u_0\varphi_2(0)dx
\end{aligned}
\label{MV-SH}
\end{equation*}
for all $\varphi_1, \varphi_2 \in C_c([0,T)\times\R^2)$, and
for almost all $t\in[0,T)$ it holds that
\begin{equation}\label{MV-NE}
\langle {\rm Tr}(\overline{e}(t))+a\overline{p}(t), 1\rangle
-\int_{\R^2}\left\{\frac{1}{2} |u_0|^2h_0+a(h_0)^2 \right\}dx
\leq\int_{[0,T)\times\R^2}\chi dxdt,
\end{equation}
where
\begin{equation*}
\begin{split}
\overline{h}(t,x)=&\int_{\R_+\times\R^2}
\lambda_1\,\,d\mu_{t,x}(\lambda),\\
\overline{p}(t,x)=&\int_{\R_+\times\R^2}
\lambda_1^2\,\,d\mu_{t,x}(\lambda)
+\Big(\int_{S^2_+}\beta_1^2\,\,d\nu_{t,x}^\infty(\beta) \Big) m,\\
\overline{m}(t,x)=&\int_{\R_+\times\R^2}
\sqrt{\lambda_1}\,(-\tilde{f}+Id)^{-1}
(x,(\lambda_2,\lambda_3))\,\,d\mu_{t,x}(\lambda),\\
\overline{e}(t,x)=&\int_{\R_+\times\R^2}
(-\tilde{f}+Id)^{-1}
(x,(\lambda_2,\lambda_3))\otimes(-\tilde{f}+Id)^{-1}
(x,(\lambda_2,\lambda_3))\,\,d\mu_{t,x}(\lambda)\\
&+\Big(\int_{S^2_+}(\beta_2,\beta_3)\otimes
(\beta_2,\beta_3)\,\,d\nu_{t,x}^\infty(\beta) \Big) m,\\
{\rm Tr}(\overline{e})(t,x)=&\int_{\R_+\times\R^2}
(-\tilde{f}+Id)^{-1}
(x,(\lambda_2,\lambda_3))\cdot(-\tilde{f}+Id)^{-1}
(x,(\lambda_2,\lambda_3))\,\,d\mu_{t,x}(\lambda)\\
&+\Big(\int_{S^2_+}\beta_2^2+\beta_3^2 \,\,d\nu_{t,x}^\infty(\beta) \Big) m,\\
\overline{f}(t,x)=&\int_{\R_+\times\R^2} \lambda_1
\tilde{f}\circ (-\tilde{f}+Id)^{-1}
(x,(\lambda_2,\lambda_3))d\mu_{t,x}(\lambda),\\
\chi(t,x)=&\int_{\R_+\times\R^2} \lambda_1 \tilde {f}\circ (-\tilde{f}+Id)^{-1}
(x,(\lambda_2,\lambda_3))\cdot (-\tilde{f}+Id)^{-1}
(x,(\lambda_2,\lambda_3))d\mu_{t,x}(\lambda)
\end{split}
\end{equation*}
for almost all $(t,x)\in[0,T]\times\T^2$.

Define for every $\mu$-measurable set $A\times B\subset\R^+\times\R^2$ the push-forward of $\mu$ through the map $(-\tilde f+Id)$ as
\begin{equation*}
\nu(A\times B):=(-\tilde f+Id)_\#\mu(A\times B)=\mu(A\times (-\tilde f+Id)^{-1}(B)).
\end{equation*}


Hence using again the variables $(\lambda_1,\lambda')\in\R^+\times\R^2$  (which correspond to $\lambda_1,\lambda_2, \lambda_3$) and $(\beta_1,\beta')\in\mathbb{S}^+$  (corresponding to $\beta_1, \beta_2, \beta_3$), when integrating with respect to $\nu_{t,x}$ and $\nu_{t,x}^\infty$, respectively, the problem can be translated to
\begin{equation*}
\begin{aligned}
\bar{h}&= \langle\lambda_1,\nu\rangle\\
\overline{h^2}&=\langle\lambda_1^2,\nu\rangle+\langle\beta_1^2,\nu^\infty\rangle m\\
\overline{hu}&=\langle\sqrt{\lambda_1}\lambda',\nu\rangle\\
\overline{hu\otimes u}&=\langle\lambda'\otimes\lambda',\nu\rangle+\langle\beta'\otimes\beta',\nu^\infty\rangle m\\
\overline{h|u|^2}&=\langle|\lambda'|^2,\nu\rangle+\langle|\beta'|^2,\nu^\infty\rangle m\\
\overline{h(-dB(u)+f)}&=
\langle \lambda_1
\tilde{f}\circ (-\tilde{f}+Id)^{-1}
(x,\lambda'),\mu\rangle.
%
\end{aligned}
\end{equation*}
We say that $(\mu,m,\mu^\infty)$ is a  \term{measure-valued solution} of~\eqref{savhut} with initial data $(h_0,u_0)$ if
 for every $\tau\in[0,T]$, $\psi\in C^1([0,T]\times\T^2;\R)$, $\phi\in C^1([0,T]\times\T^2;\R^2)$ it holds that
\begin{equation}\label{mass_momentum}
\begin{aligned}
\int_0^\tau\int_{\T^2}\partial_t\psi \bar{h}+\nabla\psi\cdot\overline{hu}dxdt+\int_{\T^2}\psi(x,0)h_0-\psi(x,\tau)\bar{h}(x,\tau)dx&=0,\\
\int_0^\tau\int_{\T^2}\partial_t\phi\cdot\overline{hu}+\nabla\phi : \overline{hu\otimes u}+a\diverg\phi\overline{h^2}
+\phi\cdot\overline{h(-dB(u)+f)}&dxdt\\
+\int_{\T^2}\phi(x,0)\cdot h_0u_0-\phi(x,\tau)\cdot\overline{hu}(x,\tau)&=0.
\end{aligned}
\end{equation}

For a.e.\ $t$, we set
\begin{equation*}
E_{mvs}(t):=\int_{\T^2}\frac{1}{2}\overline{h|u|^2}(t,x)+a\overline{h^2}(t,x)dx
\end{equation*}
and
\begin{equation*}
E_0:=\int_{\T^n}\frac{1}{2}h_0|u_0|^2(x)+h_0^2(x)dx.
\end{equation*}
We say that a measure-valued solution is \term{admissible} 
if
\begin{equation}\label{Emvsenergy}
\begin{aligned}
E_{mvs}(t)\leq E_0-\int_0^t\int_{\T^n}d\overline{h(B(u)-f)\cdot u}(t,x)
\end{aligned}
\end{equation}
in the sense of distributions.


\subsection{Weak-Strong Uniqueness}
\begin{theorem}\label{weak-strong}
Let $f\in L^{\infty}([0,T];L^2(\T^2))$ and suppose $H\in W^{1,\infty}([0,T]\times\T^2), U\in C^1([0,T]\times\T^2)$ is a solution of~\eqref{savhut} with initial data $h_0\geq c>0$, $h_0\in L^2(\T^2)$, $h_0|u_0|^2\in L^1(\T^2)$ and $H(t,x)\geq c>0$ for some constant $c$ and every $(t,x)\in[0,T]\times\T^n$. If $(\nu,m,\nu^\infty)$ is an admissible measure-valued solution with the same initial data, then 
\begin{equation*}
\nu_{t,x}=\delta_{(H(t,x),\sqrt{H(t,x)}U(t,x))} \text{ for a.e. $t,x$, and $m=0$.}
\end{equation*} 
\end{theorem}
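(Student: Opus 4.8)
The proof follows that of Theorem~\ref{Eweak-strong} essentially verbatim, specialised to $\gamma=2$ and with $a$ playing the role of $\kappa$; the only genuinely new ingredient is the treatment of the friction term. The plan is to introduce the relative energy
\begin{equation*}
E_{rel}(t)=\int_{\T^2}\frac{1}{2}\langle|\lambda'-\sqrt{\lambda_1}U|^2,\nu_{t,x}\rangle+a\langle(\lambda_1-H)^2,\nu_{t,x}\rangle\,dx+\int_{\T^2}\frac{1}{2}\langle|\beta'|^2,\nu^\infty_{t,x}\rangle+a\langle\beta_1^2,\nu^\infty_{t,x}\rangle\,dm_t(x),
\end{equation*}
where $m=m_t\otimes dt$ is the disintegration available by admissibility (Section~\ref{young}). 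As in the Euler case, strict convexity of $\lambda_1\mapsto a\lambda_1^2$ gives $E_{rel}\ge0$, and $E_{rel}(t)=0$ for a.e.\ $t$ forces first $\pi^{\lambda_1}\nu_{t,x}=\delta_{H(t,x)}$, then $\nu_{t,x}=\delta_{(H(t,x),\sqrt{H(t,x)}U(t,x))}$, and $m=0$; hence it suffices to prove $E_{rel}(\tau)=0$ for a.e.\ $\tau$.

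I would then run the relative-energy computation exactly as in \eqref{Etest1}--\eqref{Emomentum}: test the momentum balance of \eqref{mass_momentum} with $\phi=U$ and the mass balance with $\psi=\frac{1}{2}|U|^2$ and $\psi=2aH$; insert the three resulting identities together with the admissibility inequality \eqref{Emvsenergy} into the expansion of $E_{rel}(\tau)$; collect terms as in \eqref{Eequality1}--\eqref{Eequality3}; apply the divergence theorem as in \eqref{Eh^2estimate}; and use the mass and momentum equations of $(H,U)$ — here $\frac{1}{H}\nabla(aH^2)=2a\nabla H$ and $\partial_tU+(U\cdot\nabla)U=-d\xi_U+f-2a\nabla H$, where $\xi_U(t,x)\in B(U(t,x))$, $|\xi_U|\le1$, is a measurable selection provided by the fact that $(H,U)$ solves \eqref{savhut} in the inclusion sense. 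All pressure and $\nabla H$ contributions then reorganise, just as in the Euler proof, into a quantity bounded by $C\norm{U}_{C^1}\int_0^\tau E_{rel}(t)\,dt$, and the terms carrying the external force $f$ cancel pairwise, $\overline{hf\cdot(u-U)}+\overline{hf\cdot(U-u)}=0$, exactly like the $G$-terms in Theorem~\ref{Eweak-strong}.

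What does not cancel is the friction, and this is the crux. Its two contributions — one from the admissibility inequality, one from the momentum equation for $(H,U)$ — combine into
\begin{equation*}
-d\int_0^\tau\int_{\T^2}\overline{h\,\bigl(B(u)-B(U)\bigr)\cdot(u-U)}\,dx\,dt,
\end{equation*}
which must be read through the resolvent representation of \cite{gwiazda2005}: using the Lipschitz map $(-\tilde f+Id)^{-1}$ and the identity $\tilde f\circ(-\tilde f+Id)^{-1}=Id\circ(-\tilde f+Id)^{-1}-Id$ of Lemma~\ref{monotoniczna-ciaglosc}, this keeps every integrand a continuous function of admissible growth and fixes a measurable single-valued selection $\xi\in B(u)$ on $\supp\mu_{t,x}$, so that the displayed term equals $-d\int_0^\tau\int_{\T^2}\langle\lambda_1\,(\xi-\xi_U)\cdot(u-U),\mu_{t,x}\rangle\,dx\,dt$. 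Since $\lambda_1\ge0$ and $B=\partial|\cdot|$ is a monotone operator, $(\xi-\xi_U)\cdot(u-U)\ge0$ pointwise, so this term is $\le0$ and may be discarded from the upper bound for $E_{rel}(\tau)$. We are then left with $E_{rel}(\tau)\le C\norm{U}_{C^1}\int_0^\tau E_{rel}(t)\,dt$, and Gronwall's inequality gives $E_{rel}\equiv0$, which is the assertion. The step requiring real care is precisely this friction bookkeeping: the multivalued term has to be carried in its $\mu$/resolvent form throughout (so that Lemma~\ref{monotoniczna-ciaglosc} keeps every integrand continuous), one has to pin down the selection $\xi_U$ coming from the strong solution, and one has to verify that the various friction pieces genuinely assemble into the symmetric, sign-definite combination above — everything else being a transcription of the proof of Theorem~\ref{Eweak-strong} with $\gamma=2$.
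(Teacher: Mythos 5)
Your proposal is correct and follows exactly the route the paper takes: the same relative energy, the same transcription of the compressible Euler relative-energy computation with $\gamma=2$ and $\kappa=a$, and the same observation that the two friction contributions assemble into $-d\int_0^\tau\int_{\T^2}\overline{h(B(u)-B(U))\cdot(u-U)}\,dx\,dt\le 0$ by monotonicity of $B=\partial|\cdot|$, after which Gronwall closes the argument. In fact you supply more detail than the paper does on the one genuinely delicate point, namely carrying the multivalued term in its resolvent form via Lemma~\ref{monotoniczna-ciaglosc} and fixing the selection $\xi_U$ from the strong solution, which the paper leaves implicit behind the phrase \enquote{following the computations of Section~\ref{Euler}}.
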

\begin{proof}
Let us first define for a.e.\ $t\in[0,T]$ the \term{relative energy} between $(H,U)$ and the measure-valued solution as
\begin{equation*}
\begin{aligned}
E_{rel}(t)&=\int_{\T^2}\frac{1}{2}\overline{h|u-U|^2}+a\overline{(h-H)^2}dx\\
&=\int_{\T^2}\frac{1}{2}\langle\left|\lambda'-\sqrt{\lambda_1}U\right|^2,\nu_{t,x}\rangle dx+\frac{1}{2}\int_{\T^2}\langle|\beta'|^2,\nu_{t,x}^\infty\rangle dm_t(x)\\
&\hspace{1cm}+a\int_{\T^2}\langle|\lambda_1-H|^2,\nu_{t,x}\rangle dx+a\int_{\T^2}\langle \beta_1^2,\nu^\infty_{t,x}\rangle dm_t(x).
\end{aligned}
\end{equation*}
Here, the measure $m_t\in\mathcal{M}^+(\T^2)$ is obtained by the disintegration $m(dtdx)=m_t(dx)\otimes dt$, which is well-defined thanks to the admissibility.

It is straightforward to observe that $E_{rel}(t)=0$ for a.e.\ $t$ implies Theorem~\ref{weak-strong}. 

Following the computations of Section~\ref{Euler} we arrive at
\begin{equation*}
E_{rel}(\tau)+\int_0^\tau\int_{\T^2}\overline{h(dB(u)-dB(U))\cdot (u-U)}dxdt\leq C\norm{U}_{C^1}\int_0^\tau E_{rel}(t)dt.
\end{equation*}
Finally, since $B$ is monotone, the integral on the left hand side is non-negative, and from Gronwall's inequality it follows that $E_{rel}(\tau)=0$ for a.e. $t$.
\end{proof}

\section{Dissipation of Momentum in Finite Time}
\begin{theorem}\label{mvsfinitetime}
Let $(\nu,m,\nu^\infty)$ be an admissible measure-valued solution of the Savage-Hutter system~\eqref{savhut} with initial energy $E_0$ and 
\begin{equation*}
\norm{f}_{L^\infty(\R^+\times\T^2)}<d.
\end{equation*}
Then there exists $0\leq T<\infty$ such that 
\begin{equation*}
M(t):=\int_{\T^2}\overline{h|u|}(t,x)dx=0\hspace{0.3cm}\text{for almost every $t>T$.}
\end{equation*}
Moreover, there exists a constant $C$ depending only on $d-\norm{f}_\infty$ and $a$ such that
\begin{equation*}
T\leq CE_0^{1/4}.
\end{equation*}
\end{theorem}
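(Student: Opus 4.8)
The plan is to convert the admissibility inequality \eqref{Emvsenergy} into a scalar differential inequality for an excess‑energy functional, exploiting that the frictional dissipation is of dry‑friction type: it controls the \emph{first} power of the momentum, and this linear‑in‑$u$ structure is exactly what forces extinction in finite, rather than merely infinite, time. First I would record the elementary reductions. Testing the mass equation in \eqref{mass_momentum} with $\psi\equiv1$ gives conservation of total mass $m_0:=\int_{\T^2}\overline h(t,x)\,dx=\int_{\T^2}h_0\,dx$, and Jensen's inequality in the Young measure together with Cauchy--Schwarz on $\T^2$ (of unit measure) gives $\int_{\T^2}a\overline{h^2}\,dx\ge a\int_{\T^2}\overline h^2\,dx\ge a m_0^2$; moreover $a m_0^2\le C(a)E_0$, so $m_0\le C(a)E_0^{1/2}$. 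Set $\widetilde E(t):=E_{mvs}(t)-a m_0^2\ge0$. Since $B(u)\cdot u=|u|$ and $|f|\le\norm f_\infty$, the dissipation density in \eqref{Emvsenergy} is at least $(d-\norm f_\infty)\overline{h|u|}$, whence $\tfrac{d}{dt}\widetilde E=\tfrac{d}{dt}E_{mvs}\le-(d-\norm f_\infty)M$ distributionally, so $\widetilde E$ (passing to the non‑increasing good representative) is non‑increasing. Finally, Cauchy--Schwarz in the Young measure and mass conservation give $M(t)\le m_0^{1/2}\bigl(\int_{\T^2}\overline{h|u|^2}\bigr)^{1/2}\le(2m_0\widetilde E(t))^{1/2}\le C(a)E_0^{1/4}\widetilde E(t)^{1/2}$, using that $\tfrac12\int_{\T^2}\overline{h|u|^2}=E_{mvs}-\int_{\T^2}a\overline{h^2}\le\widetilde E$. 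Consequently, as soon as $\widetilde E(T)=0$ for some $T$ we have $\widetilde E\equiv0$ and $M\equiv0$ on $[T,\infty)$; the whole problem thus reduces to showing $\widetilde E(T)=0$ for some $T\le CE_0^{1/4}$.

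For this the target is a differential inequality $\tfrac{d}{dt}\widetilde E\le-c(a,d-\norm f_\infty)\,\widetilde E^{3/4}$, i.e.\ $\tfrac{d}{dt}\widetilde E^{1/4}\le-c'$, which integrated from $\widetilde E(0)\le C(a)E_0$ gives $\widetilde E(t)=0$ for $t\ge CE_0^{1/4}$. The difficulty is that the inequality available for free, $M(t)\le C(a)E_0^{1/4}\widetilde E(t)^{1/2}$, is an \emph{upper} bound on the dissipation rate, whereas closing the estimate needs a \emph{lower} bound $M\gtrsim\widetilde E^{3/4}$ (up to mass factors) --- equivalently, one must rule out that the excess energy lingers in the potential part, or in a concentrated velocity spike, while $M$ is momentarily tiny. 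No such lower bound can follow from soft inequalities (take $u\equiv0$ with $\overline h$ far from $m_0$: then $M=0$ but $\widetilde E>0$), so the momentum balance of \eqref{mass_momentum}, not just the energy balance, must enter; producing a sub‑linear power of $\widetilde E$ on the right‑hand side, the source of the finite time, is what I expect to be the main obstacle.

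The route I would take is to add a negative‑norm corrector. Let $\xi_t$ solve $-\Delta\xi_t=\overline h(t,\cdot)-m_0$ on $\T^2$ (solvable, the right side having zero mean) and set $\Phi(t):=\widetilde E(t)+\varepsilon\int_{\T^2}\overline{hu}\cdot\nabla\xi_t\,dx$ with $\varepsilon\sim m_0^{-1/2}$ small enough that $\tfrac12\widetilde E\le\Phi\le2\widetilde E$. Differentiating and substituting the momentum equation, the leading term of $\tfrac{d}{dt}\int_{\T^2}\overline{hu}\cdot\nabla\xi_t$ is $-a\int_{\T^2}\overline{h^2}(\overline h-m_0)$, which, using $\overline h\ge0$, $\overline{h^2}\ge\overline h^2$ and Jensen, dominates a positive multiple of the excess potential energy; the convective flux term, the $\partial_t\xi_t=(-\Delta)^{-1}\diverg\overline{hu}$ term, and the residual oscillation and concentration terms are estimated by the kinetic energy and fed back into $M$ through the Cauchy--Schwarz bound above, while the set‑valued friction is handled exactly as in the weak--strong proof, writing it via the resolvent of $\tilde f$ (Lemma~\ref{monotoniczna-ciaglosc}). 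The delicate points, which I expect to be the real work, are the elliptic estimates for $\nabla\xi_t$ given only $L^2$ control of $\overline h-m_0$ (interpolating against the conserved mass), the control of $\partial_t\xi_t$, the uniform treatment of the concentration pair $(\nu^\infty,m)$, and arranging the bookkeeping so that the surviving right‑hand side is a sub‑linear power of $\Phi$; granting $\tfrac{d}{dt}\Phi\le-c\Phi^{3/4}$, the bound $T\le CE_0^{1/4}$ follows by elementary integration together with the reduction above.
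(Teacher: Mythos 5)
Your strategy diverges completely from the paper's, and the divergence is fatal. The paper's proof uses only the admissibility inequality and a pointwise interpolation: by Jensen's inequality on the unit-mass torus and Young's inequality with exponents $3$ and $3/2$,
\begin{equation*}
M(t)^{4/3}\le\int_{\T^2}\langle\lambda_1^{2/3}|\lambda'|^{4/3},\nu_{t,x}\rangle\, dx\le\tfrac13\int_{\T^2}\langle\lambda_1^2,\nu_{t,x}\rangle\, dx+\tfrac23\int_{\T^2}\langle|\lambda'|^2,\nu_{t,x}\rangle\, dx\le C(a)E_{mvs}(t),
\end{equation*}
which, combined with \eqref{Emvsenergy} and $B(u)\cdot u=|u|$, yields the closed integral inequality $M(t)^{4/3}\le C(a)\bigl(E_0-(d-\norm{f}_\infty)\int_0^tM(s)\,ds\bigr)$; this is then compared with the ODE $\tilde M'=-\tfrac34C(a)(d-\norm{f}_\infty)\tilde M^{2/3}$, whose solution vanishes at time $\sim E_0^{1/4}$. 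No corrector, no elliptic problem, no use of the momentum equation beyond admissibility. (Your instinct that an \emph{upper} bound of $M$ by a power of the energy cannot by itself force extinction is not groundless: the pointwise comparison $M\le\tilde M$ asserted in the paper deserves more justification than it receives, since for instance $M(t)=e^{-t}$ satisfies an integral inequality of exactly this form without ever vanishing. But the cure is not the one you propose.)

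The genuine gap is that your reduction of the theorem to \enquote{$\widetilde E(T)=0$ for some finite $T$} is a reduction to a \emph{false} statement, so the differential inequality $\tfrac{d}{dt}\Phi\le-c\Phi^{3/4}$ you aim for cannot hold. The stationary solutions highlighted in the paper --- $u\equiv0$ together with any non-constant $h$ satisfying $|\nabla h-f/(2a)|\le d/(2a)$ --- are admissible (even weak) solutions along which $\widetilde E=a\bigl(\int_{\T^2}h^2\,dx-m_0^2\bigr)$ is a positive constant for all time while $M\equiv0$; since $\Phi$ is comparable to $\widetilde E$, one has $\tfrac{d}{dt}\Phi=0>-c\,\Phi^{3/4}$ there. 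You exhibit precisely these configurations as the obstruction to a soft lower bound $M\gtrsim\widetilde E^{3/4}$, but they obstruct the hard one equally: the theorem asserts extinction of the momentum, not convergence of the density to its mean. Concretely, the term your scheme cannot absorb is the friction contribution $\int_{\T^2}\overline{h(-dB(u)+f)}\cdot\nabla\xi_t\,dx$ to $\tfrac{d}{dt}\int\overline{hu}\cdot\nabla\xi_t\,dx$: because $B(0)$ is the whole closed unit ball, this term is of size $O\bigl(m_0\norm{\nabla\xi_t}_\infty\bigr)$ with no sign even when $M=0$, it is not controlled by the kinetic energy or by $M$, and on stationary profiles it exactly cancels your \enquote{leading term} $-a\int_{\T^2}\overline{h^2}(\overline h-m_0)\,dx$. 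Any functional that is to decay to zero in finite time must vanish on all stationary states, i.e.\ it must be built from the momentum alone --- which is exactly what the paper's direct estimate on $M$ does.
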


\begin{proof}
For the momentum we have
\begin{equation*}
M(t)=\int_{\T^2}\overline{h|u|}(t,x)dx=\int_{\T^2}\langle\sqrt{\lambda_1}|\lambda'|,\nu_{t,x}\rangle dx.
\end{equation*}
Note in particular that the momentum does not concentrate, i.e.\ the 2-2-recession function of $(\lambda_1,\lambda')\mapsto \lambda_1|\lambda'|$ is zero.

For the following estimate, we use Jensen's inequality applied to the function $|\cdot|^{4/3}$ (recall that, according to our convention, the torus has measure 1), then Young's inequality,
\begin{equation*}
ab\leq\frac{a^p}{p}+\frac{b^q}{q},\hspace{0.3cm}\frac{1}{p}+\frac{1}{q}=1,
\end{equation*}
 with the conjugate exponents $3$ and $3/2$, and finally the admissibility assumption:
\begin{equation*}
\begin{aligned}
M(t)^{4/3}&=\left(\int_{\T^2}\langle\sqrt{\lambda_1}|\lambda'|,\nu_{t,x}\rangle dx\right)^{4/3}\\
&\leq \int_{\T^2}\langle\sqrt{\lambda_1}^{4/3}|\lambda'|^{4/3},\nu_{t,x}\rangle dx\\
&\leq \frac{1}{3}\int_{\T^2}\langle\lambda_1^2,\nu_{t,x}\rangle dx+\frac{2}{3}\int_{\T^2}\langle|\lambda'|^{2},\nu_{t,x}\rangle dx\\
&\leq C(a) E_{mvs}(t)\leq C(a)\left(E_0-\int_0^t\int_{\T^2}(d-\norm{f}_\infty)\langle\sqrt{\lambda_1}|\lambda'|,\nu_{s,x}\rangle dxds\right)\\
&= C(a)\left(E_0-(d-\norm{f}_\infty)\int_0^tM(s)ds\right),
\end{aligned}
\end{equation*}
where 
\begin{equation*}
C(a)=\max\left\{\frac{1}{3a},\frac{4}{3}\right\}.
\end{equation*}
Therefore, for almost every $t$, $M(t)$ is less than or equal to the solution of the integral equation
\begin{equation*}
\tilde{M}(t)^{4/3}=C(a)E_0-C(a)(d-\norm{f}_\infty)\int_0^t\tilde{M}(s)ds
\end{equation*}
or equivalently (after differentiating)
\begin{equation*}
\tilde{M}'(t)=-\frac{3}{4}C(a)(d-\norm{f}_\infty)\tilde{M}(t)^{2/3},\hspace{0.2cm}\tilde{M}(0)=(C(a)E_0)^{3/4}.
\end{equation*}
The solution of this ordinary differential equation is easily computed as
\begin{equation*}
M(t)=\begin{cases}\left[\frac{1}{3}(3(C(a)E_0)^{1/4}-\frac{3}{4}C(a)(d-\norm{f}_\infty)t)\right]^3 & \text{if $3(C(a)E_0)^{1/4}-\frac{3}{4}C(a)(d-\norm{f}_\infty)t\geq0$,}\\
0 & \text{otherwise.}
\end{cases}
\end{equation*}
In fact,  $\left[\frac{1}{3}(3(C(a)E_0)^{1/4}-\frac{3}{4}C(a)(d-\norm{f}_\infty)t)\right]^3$ would also be a solution for all times, but we know a priori that $M(t)$ must be non-negative.

It follows that there is a time 
\begin{equation*}
T\leq\frac{4}{d-\norm{f}_\infty}C(a)^{-3/4}E_0^{1/4}
\end{equation*}
 after which $M(t)=0$ almost everywhere.

\end{proof}

\begin{corollary}
Let $(h,u)$ be an admissible weak solution of the Savage-Hutter equations with initial energy $E_0$ and $\norm{f}_\infty<d$. Then there exists a time $0\leq T<\infty$ such that for almost every $t>T$, $(h,u)$ is stationary, i.e.\ $u(t,x)=0$ for almost every $t>T$ and $x\in\T^2$, $\partial_th(t,x)=0$ for almost every $t>T$, $x\in\T^2$, and
\begin{equation*}
\left|\nabla h-\frac{f}{2a}\right|\leq\frac{d}{2a}.
\end{equation*}
Moreover, $T$ satisfies the estimate of Theorem~\ref{mvsfinitetime}.
\end{corollary}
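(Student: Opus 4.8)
The plan is to derive the corollary directly from Theorem~\ref{mvsfinitetime} by regarding the weak solution $(h,u)$ as a (Dirac-type) measure-valued solution. Putting $\nu_{t,x}=\delta_{(h(t,x),\sqrt{h(t,x)}\,u(t,x))}$ and $m=0$ gives a measure-valued solution of~\eqref{savhut} in which every overline quantity collapses to its pointwise value ($\overline{h|u|}=h|u|$, $\overline{h^2}=h^2$, $\overline{h|u|^2}=h|u|^2$, and so on), so the measure-valued admissibility inequality~\eqref{Emvsenergy} is nothing but the energy inequality satisfied by the admissible weak solution. Theorem~\ref{mvsfinitetime} then applies and produces a time $T$, obeying the estimate stated there, such that
\[
\int_{\T^2}h|u|(t,x)\,dx=0\qquad\text{for a.e.\ }t>T.
\]

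Next, since $h\ge0$ and $|u|\ge0$, the vanishing of this integral forces $h|u|=0$ a.e.\ on $(T,\infty)\times\T^2$, i.e.\ $u(t,x)=0$ for a.e.\ $x$ with $h(t,x)>0$. Because $u$ enters the weak formulation~\eqref{mass_momentum} only through $hu$, $hu\otimes u$ and $h(-dB(u)+f)$, all of which are insensitive to the values of $u$ on $\{h=0\}$, we may simply take $u(t,\cdot)\equiv 0$ for a.e.\ $t>T$. Substituting $u\equiv0$ into the weak form of the mass equation kills the flux term $\diverg(hu)$ and leaves $\partial_t h=0$ in the distributional sense on $(T,\infty)\times\T^2$; hence $h$ is time-independent for $t>T$.

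It then remains to substitute $u\equiv0$ (so also $\partial_t h=0$) into the weak form of the momentum equation. Its left side collapses to the distributional gradient $\nabla(ah^2)$, while the right side becomes $h\bigl(f-dB(0)\bigr)=h\bigl(f-d\,\overline{B_1(0)}\bigr)$; remembering that~\eqref{savhut} is really an inclusion, this reads $\nabla(ah^2)\in h\bigl(f-d\,\overline{B_1(0)}\bigr)$. The right side furnishes, for a.e.\ $t$, an $L^2(\T^2)$ function, so $\nabla(h^2)\in L^2$; writing $\nabla(h^2)=2h\nabla h$ (with $\nabla h:=0$ on the zero set of $h$) and dividing the inclusion by $h$ on $\{h>0\}$ yields $\lvert 2a\nabla h-f\rvert\le d$, that is $\bigl\lvert\nabla h-\tfrac{f}{2a}\bigr\rvert\le\tfrac{d}{2a}$. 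On $\{h=0\}$ the same bound holds because there $\lvert\nabla h-\tfrac{f}{2a}\rvert=\tfrac{\lvert f\rvert}{2a}<\tfrac{d}{2a}$ by the hypothesis $\norm{f}_\infty<d$. This gives all three assertions, with $T$ inheriting the estimate of Theorem~\ref{mvsfinitetime}.

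The step demanding the most care is this last one: passing from the distributional inclusion form of the momentum equation to a genuinely pointwise constraint on $\nabla h$. One must justify that the equation endows $h$ with enough regularity for $\nabla h$ to be meaningful, verify the chain-rule identity $\nabla(h^2)=2h\nabla h$ together with $\nabla h=0$ a.e.\ on $\{h=0\}$, and keep track of the fact that $u$ was only ever defined modulo an $h\,dx$-null set. The remaining steps are a straightforward reading of~\eqref{mass_momentum} once $u\equiv0$ has been established.
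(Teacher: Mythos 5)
Your proof follows the same route as the paper: identify the weak solution with the Dirac measure-valued solution $\nu=\delta_{(h,\sqrt{h}u)}$, $m=0$, apply Theorem~\ref{mvsfinitetime} to make the momentum vanish after time $T$, and then read off $\partial_t h=0$ and the differential inclusion $\nabla(ah^2)\in h\bigl(f-d\,\overline{B_1(0)}\bigr)$, which yields the gradient constraint. You in fact supply more detail than the paper does (the treatment of the set $\{h=0\}$, the chain rule for $\nabla(h^2)$, and the observation that $u$ is only determined $h\,dx$-a.e.), all of which the paper's one-line ``clearly equivalent'' glosses over.
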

\begin{proof}
As every admissible weak solution can be viewed as an admissible measure-valued solution via the identification $\nu=\delta_{(h,\sqrt{h}u)}$, $m=0$, from Theorem~\ref{mvsfinitetime} we obtain a time $T$ such that after this time, the momentum is zero:
\begin{equation*}
\int_{\T^2}h|u|dx=0\hspace{0.2cm}\text{for almost every $t>T$.}
\end{equation*}
Therefore, the Savage-Hutter equations reduce to $\partial_th=0$ and
\begin{equation*}
\nabla(ah^2)=-dhB(u)+hf.
\end{equation*}
The latter is clearly equivalent to $|\nabla h-f/2a|\leq d/2a$, given that $|B(u)|\leq1$.
\end{proof}

\bibliography{Thesis}
\bibliographystyle{amsalpha}


\end{document}